\newtheorem{theorem}{Theorem}
\newtheorem{proposition}[theorem]{Proposition}
\newtheorem{corollary}[theorem]{Corollary}
\theoremstyle{definition}
\newtheorem{example}[theorem]{Example}
\theoremstyle{remark}
\newtheorem{remark}[theorem]{Remark}
\newcommand{\DD}{{\mathbb D}}
\DeclareMathOperator{\Aut}{Aut} 
 \DeclareMathOperator{\re}{Re}
\DeclareMathOperator{\im}{Im}
\renewcommand{\phi}{\varphi}
\subjclass[2010]{32A07, 32F45, 53C25}
\begin{document}

\title[Convex tube domains]{Regularity of complex geodesics and (non)-Gromov hyperbolicity of convex tube domains}

\address{Carl von Ossietzky Universitat Oldenburg, Institut fur Mathematik, Postfach 2503, ¨
D-26111 Oldenburg, Germany}

\address{Institute of Mathematics, Faculty of Mathematics and Computer Science, Jagiellonian
University,  \L ojasiewicza 6, 30-348 Krak\'ow, Poland}

\author{Peter Pflug}\email{Peter.Pflug@uni-oldenburg.de}
\author{W\l odzimierz Zwonek}\email{wlodzimierz.zwonek@im.uj.edu.pl}
\thanks{The paper was initiated while the second author was at the research stay at the Carl von Ossietzky University of Oldenburg supported by the Alexander von Humboldt Foundation. The second Author was also partially supported by the OPUS grant no. 2015/17/B/ST1/00996 of the National Science Centre, Poland. }

\keywords{(convex) tube domains, complex geodesics, (non)-Gromov hyperbolic, Hilbert metric}

\begin{abstract} We deliver examples of non-Gromov hyperbolic tube domains with convex bases (equipped with the Kobayashi distance). This is shown by providing a criterion on non-Gromov hyperbolicity of (non-smooth) domains.The results show the similarity of geometry of the bases of non-Gromov hyperbolic tube domains with the geometry of non-Gromov hyperbolic convex domains. A connection between the Hilbert metric of a convex domain $\Omega$ in $\mathbb R^n$ with the Kobayashi distance of the tube domain over the domain $\Omega$  is also shown. Moreover, continuity properties up to the boundary of complex geodesics in tube domains with a smooth convex bounded base are also studied in detail.
\end{abstract}
\maketitle

\section{Introduction} Although the results presented below will not be referring only to tube domains these domains are good models for our considerations. The starting point for the research undertaken in the paper are the results and problems discussed in papers \cite{Zim 2016a}, \cite{Zim 2016b} where Gromov hyperbolicity was studied in classes of ($\mathbb C$-)convex domains. A source of examples given there was a class of semitube domains. It is natural to study another class of unbounded domains with many symmetries and this class could be the one of tube domains.

The methods presented below lead in a natural way to phenomena that show similarity between the notion of the non-Gromov hyperbolicity of tube domains with respect to the Kobayashi distance with the same notion for convex domains in $\mathbb R^n$ equipped with the Hilbert metric. This is somehow astonishing and may be another reason for undertaking research towards a better understanding of  the Kobayashi geometry of convex tube domains. For this reason it may also be necessary to understand the boundary behavior of geodesics (complex and real) with respect to the Kobayashi distance in convex tube domains. Recall that in the proofs of positive results on Gromov hyperbolicity A. Zimmer extensively used the notions in (among others) the case of unbounded semitube domains (see \cite{Zim 2016b}). And making use of recent results of Zaj\c ac (\cite{Zaj 2015a} and \cite{Zaj 2015b}) we present detailed general results on the boundary behavior of geodesics in tube domains over bounded smooth, strictly convex bases.

\section{Definitions and results}

\subsection{Kobayashi pseudodistance, Kobayashi hyperbolicity, geo\-de\-sics, Lempert Theorem}
For a domain $D\subset\mathbb C^n$ we define \textit{the Kobayashi pseudodistance $k_D$} as follows. The function $k_D$ is the largest pseudodistance not exceeding \textit{the Lempert function $l_D$}
\begin{equation}
l_D(w,z):=\inf\{p(0,\lambda):\exists \phi\in\mathcal O(\mathbb D,D): \phi(0)=w,\phi(\lambda)=z\},\;w,z\in D.
 \end{equation}
 In the above formula $p$ denotes the Poincar\'e distance on the unit disc $\mathbb D$, $\mathbb D:=\{\lambda\in\mathbb C:|\lambda|<1\}$. We also put $\mathbb T:=\partial\mathbb D$. In the case $k_D$ is actually distance, i. e. $k_D(w,z)>0$, $w,z\in D$, $w\neq z$, we call the domain $D$ \textit{Kobayashi hyperbolic}. If $D$ is a bounded domain then $D$ is always Kobayashi hyperbolic. In the case $D$ is a convex domain then we may easily get that $D$ is linearly isomorphic with $\mathbb C^k\times D^{\prime}$, where $0\leq k\leq n$ and $D^{\prime}\subset\mathbb C^{n-k}$ is a Kobayashi hyperbolic convex domain (see e. g. Proposition 1.2 in \cite{Bra-Sar 2009}).

 The fundamental Lempert Theorem states among others that if $D\subset\mathbb C^n$ is a Kobayashi hyperbolic convex domain then for any distinct points $w,z\in D$ there is a \textit{ complex geodesic} $f:\mathbb D\to D$ passing through $w,z$ which, by definition means that $f$ is holomorphic, $w,z$ lie in the image of $f$, and there is a \textit{left inverse to $f$}, i. e. a holomorphic function $F:D\to\mathbb D$ such that $F\circ f$ is the identity. Composing, if necessary, with an automorphism of $\mathbb D$ we may always assume that $f(0)=w$, $f(s)=z$ for some $s\in(0,1)$. A simple consequence of the existence of complex geodesics is the existence of a \textit{ real geodesic} passing through $w$ and $z$ by which we mean a mapping $\gamma:(-1,1)\to D$ such that for any $-1<s_0<s_1<1$ we have $p(s_0,s_1)=k_{\mathbb D}(s_0,s_1)=k_D(\gamma(s_0),\gamma(s_1))$. If $f:\mathbb D\to D$ is a complex geodesic, then $\gamma:=f_{(-1,1)}:(-1,1)\to D$ is a real geodesic. And in the case the uniqueness of complex geodesics is guaranteed --- for instance for strictly convex bounded domains or (for the need of our paper) for tube domains with bounded strictly convex bounded bases (see Proposition~\ref{proposition:uniqueness})--- any real geodesic $\gamma:(-1,1)\to D$ is of the form $f\circ a_{|(-1,1)}$, where $f:\mathbb D\to D$ is a complex geodesic and $a$ is some automorphism of $\mathbb D$.

 The Lempert Theorem was originally proven by L. Lempert (see e. g. \cite{Lem 1981}). As a good reference for the Lempert theory as well as basic properties of the Kobayashi pseudodistance, Kobayashi hyperbolicity, complex geodesics that we shall use in the paper we refer the Reader to \cite{Jar-Pfl 1993}.

 \subsection{Non-Gromov hyperbolicity}

Let $d_X:X\times X\to[0,\infty)$ be a pseudodistance on a set $X$ (i. e. a function satisfying all the properties of the distance with the exception that $d_X(x,y)$ may be $0$ for $x\neq y$). Define
\begin{multline}
S_X(x,y,z,w):=\\
d_X(x,z)+d_X(y,w)-\max\{d_X(x,y)+d_X(z,w),d_X(y,z)+d_X(x,w)\},\\
x,y,z,w\in X.
\end{multline}
One of possible (equivalent) definitions of the \textit{non-Gromov hyperbolic} space $(X,d_X)$ (compare formula (2.1) in \cite{Kar-Nos 2002}) is that
\begin{equation}
S_X:=\sup\{S_X(x,y,z,w):x,y,z,w\in X\}=\infty.
\end{equation}
Let us mention here that the above definition is given originally for $d_X$ being the distance. We extend its definition in our paper so that the formulation of some results could be simplified.

In our paper we shall be particularly interested in the study of non-Gromov hyperbolicity of the space $(D,k_D)$ where $D$ is a (mostly convex) domain in $\mathbb C^n$. Note that the study of this notion for spaces $(D,k_D)$ where $D$ is convex reduces, because of the affine isomorphism of $D$ to $\mathbb C^k\times D^{\prime}$ with $D^{\prime}$ being convex and Kobayashi hyperbolic, to the study in the situation when $D$ is Kobayashi hyperbolic.

\subsection{Summary of results}
 For a domain $\Omega\subset\mathbb R^n$ denote by $T_{\Omega}$ \textit{the tube domain with the base $\Omega$} as follows
\begin{equation}
T_{\Omega}:=\{z\in\mathbb C^n: \re z\in \Omega\}.
\end{equation}
It is well-known that $T_{\Omega}$ is pseudoconvex iff $\Omega$ is convex (Bochner theorem). Additionally, if $\Omega$ is convex then $T_{\Omega}$ is Kobayashi complete (i. e. $(T_{\Omega},k_{T_{\Omega}})$ is a complete metric space) iff $T_{\Omega}$ is taut iff $T_{\Omega}$ is Kobayashi hyperbolic iff $\Omega$ contains no real line iff $T_{\Omega}$ contains no complex line.

Recall that recent results of A. Zimmer give an almost complete characterization of Gromov hyperbolic spaces $(D,k_D)$ where $D$ is a convex domain (see \cite{Zim 2016a}). Zimmer's paper was a continuation of the study of (non)-Gromov hyperbolicity of domains equipped with the Kobayashi distance presented in a series of papers (see e. g. \cite{Bal-Bon 2000}, \cite{Gau-Ses 2013}, \cite{Nik-Tho-Try 2016}).  In our paper the problem of characterization of the spaces $(T_{\Omega},k_{T_{\Omega}})$, where $\Omega\subset\mathbb R^n$ is a convex domain,  is studied. It is interesting that the results we obtain (see e. g. Corollary~\ref{corollary:gromov-necessary}) show similar geometry of the base $\Omega$ of non-Gromov hyperbolic convex tube domain $(T_{\Omega},k_{T_{\Omega}})$  as the geometry of a non-Gromov hyperbolic convex domain when equipped with the Hilbert metric convex domain in $\mathbb C^n$ (see Proposition 4.1 and Theorem 4.2 in \cite{Kar-Nos 2002}).

In order to obtain the above mentioned result we found a criterion on non-Gromov hyperbolicity of a domain $\Omega\subset\mathbb R^n$ equipped with the pseudodistance satisfying some regularity properties (that are trivially satisfied by the Kobayashi pseudodistance) -- see Theorem~\ref{theorem:criterion}. This allows to conclude the non-Gromov hyperbolicity from the same property of some convex cone associated with the non-smooth boundary point.

We succeeded in getting some sufficient conditions for the non-Gromov hyperbolicity of tube domains with convex bases; however, we failed to get sufficient conditions for Gromov hyperbolicity in the same class of domains. A good way of getting results of that type could rely on a detailed understanding of a behavior of geodesics (both real and complex) in tube domains with smooth bounded convex bases -- in fact a similar idea is used in \cite{Zim 2016a} and \cite{Zim 2016b} when studying the same problem for general ($\mathbb C$-)convex domains. We find a fairly complete description of the continuous extension of complex geodesics up to the boundary in convex tube domains (see Theorem~\ref{theorem:geodesic-continuity}). We heavily rely on methods recently developed in \cite{Zaj 2015a} and \cite{Zaj 2015b}.

In Section~\ref{subsection:geometry-convex} we show an inequality which connects the metric geometry (the Hilbert metric) of the convex base with the Kobayashi distance of the tube domains.

In the last section we present some results loosely related to the following open problem. Is any Kobayashi hyperbolic convex domain biholomorphic to a bounded convex domain?

 \section{Non-Gromov hyperbolicity in tube domains} As already mentioned the starting point for our considerations was the need to understand Gromov hyperbolicity of convex tube domains equipped with the Kobayashi distance.
 In the section below we present sufficient conditions for non-Gromov hyperbolicity of the Kobayashi distance in convex tube domains. But the results are presented in a much more general setting. This is done because the proofs will require only some basic properties of the Kobayashi distance. It is possible that the ideas we present may also find applications in situations other than the ones presented in the paper.

 What we want to present is a criterion on non-Gromov hyperbolicity of metric spaces defined on a class of domains in $\mathbb R^n$  admitting some regularity properties. It will turn out that under these assumptions the non-Gromov hyperbolicity of the metric space defined on a cone will imply the non-Gromov hyperbolicity of the original metric space. The presentation is given for domains in $\mathbb R^n$ equipped with pseudodistances satisfying some straightforward and natural invariance properties (we call such a pair a \textit{ pseudometric space}). Then we apply the results in the case of the Kobayashi pseudodistance, mostly for tube domains. The idea we present relies on a kind of blow-up of a domain near a non-smooth boundary point (typically for convex domains but the condition is formulated for more general ones with some regularity properties of the domain imposed). The blow-up will generate a cone related to the domain whose non-Gromov hyperbolicity will imply the non-Gromov hyperbolicity of the original domain.

\bigskip

Consider a domain $D\subset\mathbb R^n$ and $x\in\partial D$. We say that the pair $(D,x)$ satisfies the property (*) if
for any $v\in\mathbb R^n$ one of the following two properties is satisfied:

\begin{equation}\label{equation:property-one}
(x+[0,\infty)v)\cap D=\emptyset
\end{equation}
or
\begin{equation}\label{equation:property-two}
\text{there is an $\epsilon>0$ such that $x+(0,\epsilon)w\subset D$ for $w\in\mathbb R^n$ sufficiently close to $v$}.
\end{equation}

In such a case we define
\begin{equation}
C_D(x):=\{v\in\mathbb R^n:\text{ the property (\ref{equation:property-two}) is satisfied}\}.
\end{equation}

Note that $C_D(x)$ is an open and connected cone.

Needles to say that if the domain $D\subset\mathbb R^n$ is convex, $x\in\partial D$, then the pair $(D,x)$ satisfies (*).
Moreover, when $D$ is convex then $C_D(x)$ is also convex. Note that in such a situation the closure of $C_D(x)$ is the solid tangent cone to $\overline{D}$ at $x$ appearing in convex geometry and $\partial C_D(x)$ is actually the tangent cone at $x$ to $\overline{D}$. If the convex domain $D$ admits at $x\in\partial D$ only one supporting hyperplane, then the cone $C_D(x)$ is the half space determined by the supporting hyperplane lying on the same side as the domain.



For technical simplification we assume, for the needs of formulation of the next proposition, that $x=0\in\partial D$, $D\subset\mathbb R^n$ is a domain, and the pair $(D,0)$ satisfies (*).

Crucial for our proof is the following property which is satisfied if the pair $(D,0)$ satisfies (*):

for any sequence of positive numbers $t_k$ converging to $\infty$ monotonically we have the convergence $t_kD\to C_D(0)$ in the sense that for any compact $K\subset C_D(0)$ there is a $k_0$ such that for all $k\geq k_0$ we have $K\subset t_kD\subset C_D(0)$.

We may now formulate the result.

\begin{theorem}\label{theorem:criterion}  Let $D\subset\mathbb R^n$ be a domain, $0\in\partial D$. Assume that $(D,0)$ satisfies (*). Assume additionally that $d_{t_kD}$ (respectively, $d_{C_D(0)}$) are pseudodistances on $t_kD$ (respectively, $C_D(0)$)  that
satisfy the following properties
\begin{itemize}
\item{(continuity property)}
$\lim_{k\to\infty}d_{t_kD}(x,y)=d_{C_D(0)}(x,y)$, $x,y\in C_D(0)$,\\
\item{(invariance property)} $d_{t_kD}(t_kx,t_ky,t_kz,t_kw)=d_D(x,y,z,w)$, for any $k$ and  $x,y,z,w\in D$.
\end{itemize}
Then if the space $(C_D(0),d_{C_D(0)})$ is non-Gromov hyperbolic then so is $(D,d_D)$.
\end{theorem}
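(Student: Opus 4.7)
The plan is a direct three-step argument: use the non-Gromov hyperbolicity of $C_D(0)$ to produce, for every $M$, four witness points in $C_D(0)$ with large $S$-value; transfer this $S$-value through the continuity hypothesis to the rescaled domains $t_kD$; and finally scale the witnesses back into $D$ via the invariance hypothesis. Since $M$ is arbitrary, this forces $\sup S_D=\infty$ and hence $(D,d_D)$ is non-Gromov hyperbolic.

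More precisely, fix $M>0$. By $\sup S_{C_D(0)}=\infty$ I can choose $x,y,z,w\in C_D(0)$ with $S_{C_D(0)}(x,y,z,w)>M+1$. The cone convergence recalled just before the theorem says that any compact subset of $C_D(0)$ eventually lies inside $t_kD$; placing $\{x,y,z,w\}$ in such a compact set, I may assume $x,y,z,w\in t_kD$ for all large $k$. The continuity hypothesis then gives $d_{t_kD}(a,b)\to d_{C_D(0)}(a,b)$ for each of the six unordered pairs $\{a,b\}\subset\{x,y,z,w\}$. Since $S$ is built from those six pairwise values by sums and a maximum, it depends continuously on them, so $S_{t_kD}(x,y,z,w)\to S_{C_D(0)}(x,y,z,w)>M+1$, and hence $S_{t_kD}(x,y,z,w)>M$ for all sufficiently large $k$.

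To finish, I read the invariance hypothesis as saying that the dilation $p\mapsto t_kp$ preserves the four-point functional, i.e.\ $S_{t_kD}(t_ka,t_kb,t_kc,t_kd)=S_D(a,b,c,d)$ for $a,b,c,d\in D$. Writing $x=t_ka,\ldots,w=t_kd$ with $a,b,c,d\in D$ (possible precisely because $x,y,z,w\in t_kD$), I obtain
\[
S_D(a,b,c,d)=S_{t_kD}(x,y,z,w)>M,
\]
so $D$ admits four points with $S_D$-value exceeding $M$. As $M$ was arbitrary, $\sup S_D=\infty$, which is the desired non-Gromov hyperbolicity. I do not expect a genuine analytic obstacle; the whole content is bookkeeping about where the witness points live --- pushing them from $C_D(0)$ into $t_kD$ before comparing distances, then pulling them back into $D$ without damaging $S$ --- and the two hypotheses are tailored so that neither step costs anything.
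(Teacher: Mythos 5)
Your proposal is correct and follows essentially the same route as the paper's proof: pick witnesses in $C_D(0)$ with large $S$-value, use the cone convergence and the continuity hypothesis to transfer the bound to $S_{t_kD}$, then use the invariance (dilation) hypothesis to pull the witnesses back into $D$. The only (welcome) difference is that you make explicit the appeal to the compact-exhaustion property $t_kD\to C_D(0)$ to place the four points inside $t_kD$, a step the paper passes over more quickly.
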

The invariance property implies that the equality 
$$S_{t_kD}(t_kx,t_ky,t_kz,t_kw)=S_D(x,y,z,w)$$ 
holds for any $k$ and for any $x,y,z,w\in D$ and thus $S_{t_kD}=S_D$ for and $k$.
\begin{proof}
First note that the continuity property gives that
\begin{equation}
\lim_{k\to\infty}S_{t_kD}(x,y,z,w)=S_{C_D(0)}(x,y,z,w),\; x,y,z,w\in C_D(0).
\end{equation}
Because of the non-Gromov hyperbolicity of $(C_D(0),d_{C_D(0)})$ for any $M\in\mathbb R$ we find $x,y,z,w\in C_D(0)$ such that
$S_{C_D(0)}(x,y,z,w)>M$. The continuity property allows us to find a $k_0$ such that for any $k\geq k_0$ we have that $(x,y,z,w\in t_kD$ and $S_{t_kD}(x,y,z,w)>M$. But the invariance property gives
\begin{equation}
S_D\left(\frac{x}{t_k},\frac{y}{t_k},\frac{z}{t_k},\frac{w}{t_k}\right)=S_{t_kD}(x,y,z,w)>M,
\end{equation}
which implies that $S_D>M$. $M$ was chosen arbitrarily so the result follows.
\end{proof}

The continuity property of the Kobayashi pseudodistance defined on arbitrary domains as well as its invariance under biholomorphic mappings (and thus under translation and dilation) allow us to apply the above theorem for convex domains. In other words we have the following.

 \begin{corollary}
 Assume that the domain $D\subset\mathbb C^n$  is convex, $z\in\partial D$. If $(C_D(z),k_{C_D(z)})$ is non-Gromov hyperbolic, then  $(D,k_D)$ is non-Gromov hyperbolic.
 \end{corollary}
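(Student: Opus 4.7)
The plan is to apply Theorem~\ref{theorem:criterion} directly, viewing $\CC^n$ as $\RR^{2n}$ and taking $d_E := k_E$ for each relevant domain $E$. After translating by $-z$ (a biholomorphism of $\CC^n$, which preserves the Kobayashi pseudodistance), I may assume $z=0\in\partial D$. Convexity of $D$ implies, as noted in the excerpt, that $(D,0)$ satisfies property (*) and that $C_D(0)$ is an open convex cone, so the theorem's framework does apply. It then remains only to verify the two hypotheses of Theorem~\ref{theorem:criterion} for the Kobayashi pseudodistance.

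The invariance property is immediate from biholomorphic invariance of the Kobayashi pseudodistance: the homothety $w\mapsto t_k w$ is a biholomorphism $D\to t_k D$, hence $k_{t_k D}(t_k x, t_k y)=k_D(x,y)$ for all $x,y\in D$.

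For the continuity property I first check that $\{t_k D\}$ is an increasing sequence of convex domains exhausting $C_D(0)$. For $0<s<t$ and $w\in sD$ one has $w/t=(s/t)(w/s)$ with $w/s\in D$ and $0\in\overline{D}$; by convexity of $D$ this yields $w/t\in D$, so $sD\subset tD$. Conversely, any $v\in C_D(0)$ satisfies $sv\in D$ for all sufficiently small $s>0$ (take $w=v$ in the defining condition), so $v\in t_k D$ whenever $t_k$ is large enough; combined with the inclusion $t_k D\subset C_D(0)$, which is a reformulation of (*), this gives $\bigcup_k t_k D = C_D(0)$. I would then invoke the standard fact that the Kobayashi pseudodistance is continuous under increasing exhaustions, i.e.\ $k_{D_k}(x,y)\downarrow k_{D}(x,y)$ for $D_k\uparrow D$ and $x,y$ eventually in $D_k$ (see \cite{Jar-Pfl 1993}); applied to $D_k=t_k D$ this gives exactly the continuity hypothesis.

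Having verified both hypotheses, the conclusion follows immediately from Theorem~\ref{theorem:criterion}. The only step that is not purely formal is the continuity of $k$ under increasing exhaustion; everything else is just unpacking biholomorphic invariance of $k$ together with convexity of $D$, so there is no real obstacle beyond citing the standard reference.
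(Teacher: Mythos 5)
Your proposal is correct and follows exactly the route the paper intends: the paper proves this corollary simply by invoking Theorem~\ref{theorem:criterion} with $d=k$, citing the invariance of the Kobayashi pseudodistance under translations and dilations and its continuity under the increasing exhaustion $t_kD\uparrow C_D(0)$. You merely spell out the details (monotonicity of $t\mapsto tD$ via $0\in\overline{D}$ and convexity, and the exhaustion of the cone) that the paper leaves implicit.
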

 The above result allows us to deduce the non-Gromov hyperbolicity of convex domains from the non-Gromov hyperbolicity of the appropriate convex cone. In applications as a basic model for non-Gromov hyperbolic domain we shall use the most obvious example of the non-Gromov hyperbolic convex domain (when equipped with the Kobayashi distance), which is the polydisc $\mathbb D^n$, $n\geq 2$, that is biholomorphic to the cone $H^n=T_{(0,\infty)^n}$, $H:=\{\lambda\in\mathbb C: \re\lambda>0\}$. This is actually the crucial observation we use below.

 Since special role is played by some convex cones we introduce the notation
 \begin{equation}
 C(e_1,\ldots,e_n,f_1,\ldots,f_n):= \mathbb Re_1+\ldots+\mathbb R e_n+(0,\infty)f_1+\ldots+(0,\infty)f_n,
 \end{equation}
 where  $\{e_1,\ldots,e_n,f_1,\ldots,f_n\}$ is an $\mathbb R$-basis of $\mathbb R^{2n}=\mathbb C^n$ but such that both
 $\{e_1,\ldots,e_n\}$ and $\{f_1,\ldots,f_n\}$ are also $\mathbb C$-bases of $\mathbb C^n$. Then the cone
 $C(e_1,\ldots,e_n,f_1,\ldots,f_n)$ does not contain any complex affine line (and thus it is Kobayashi hyperbolic) but what is more important is that it is affinely isomorphic with the tube domain $T_{(0,\infty)^n}$. The latter  is trivially biholomorphic with the polydisc $\mathbb D^n$. It is standard that the metric space $(\mathbb D^n,k_{\mathbb D^n})$ is non-Gromov hyperbolic for $n\geq 2$. Altogether, we get the following.

  \begin{corollary}
  Assume that the domain  $D\subset\mathbb C^n$, $n\geq 2$, is convex, $z\in\partial D$ and
  \begin{equation}
  C_{D}(z)=C(e_1,\ldots,e_n,f_1,\ldots,f_n),
  \end{equation}
  where $e_j$'s and $f_j$'s are as above. Then $(D,k_D)$ is non-Gromov hyperbolic.
  \end{corollary}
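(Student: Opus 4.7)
The plan is to apply the preceding corollary (which reduces non-Gromov hyperbolicity of $(D,k_D)$ to that of $(C_D(z),k_{C_D(z)})$) together with the chain of identifications already summarized in the paragraph preceding the statement: $C_D(z)=C(e_1,\ldots,e_n,f_1,\ldots,f_n)$ is complex-affinely isomorphic to $T_{(0,\infty)^n}$, which is in turn biholomorphic to $\DD^n$, which is non-Gromov hyperbolic for $n\geq 2$.

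The only step that requires any genuine work is the first identification, for which I would proceed as follows. Using that $\{e_j\}$ is a $\CC$-basis of $\CC^n$, define the $\CC$-linear isomorphism $T\colon\CC^n\to\CC^n$ by $T(e_j)=\mathbf{e}_j$ (the standard basis vectors). Each $f_j$ then has the form $T(f_j)=u_j+iw_j$ with $u_j,w_j\in\RR^n$, and the $\RR$-basis hypothesis on $\{e_1,\ldots,e_n,f_1,\ldots,f_n\}$, read modulo $\RR^n\subset\CC^n$, forces $\{w_j\}$ to be an $\RR$-basis of $\RR^n$. A direct computation then yields $T(C_D(z))=\RR^n+i\cdot\mathrm{cone}(w_1,\ldots,w_n)$. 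Multiplying by $-i$ (a $\CC$-linear map) converts this into $T_{\mathrm{cone}(w_1,\ldots,w_n)}$, and the $\CC$-linear extension of the real isomorphism $w_j\mapsto\mathbf{e}_j$ sends this tube to $T_{(0,\infty)^n}$.

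Biholomorphic invariance of the Kobayashi pseudodistance forces invariance of the four-point quantity $S_D$, so the above chain of complex-affine maps, composed with a coordinatewise Cayley biholomorphism $T_{(0,\infty)^n}\to\DD^n$, makes $(C_D(z),k_{C_D(z)})$ isometric to $(\DD^n,k_{\DD^n})$. The latter is non-Gromov hyperbolic for $n\geq 2$ (cited as standard), hence so is $(C_D(z),k_{C_D(z)})$, and the preceding corollary concludes the proof. The only real obstacle is the linear-algebraic verification in the second paragraph, where both the $\CC$-basis condition on $\{e_j\}$ and its interaction with the $\RR$-basis condition must be used together to guarantee that the final reduction to $T_{(0,\infty)^n}$ is effected by a genuine complex-affine (not merely real-affine) isomorphism; once this is established, everything else is a direct citation.
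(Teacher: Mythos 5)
Your proposal is correct and follows exactly the route the paper takes: the paper gives no separate proof of this corollary, instead asserting in the preceding paragraph that $C(e_1,\ldots,e_n,f_1,\ldots,f_n)$ is complex-affinely isomorphic to $T_{(0,\infty)^n}$, hence biholomorphic to $\DD^n$, which is non-Gromov hyperbolic for $n\geq 2$, and then invoking the previous corollary. Your second paragraph merely fills in the linear-algebraic verification of that affine isomorphism, which the paper leaves to the reader, and it does so correctly.
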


  This result may be applied in the case of tube domains.
  \begin{corollary} Let $\Omega\subset\mathbb R^n$, $n\geq 2$, be a convex domain, $x\in\partial\Omega$. Assume that $C_{\Omega}(x)=(0,\infty)f_1+\ldots+(0,\infty)f_n$, where $\{f_1,\ldots,f_n\}$ is a vector basis of $\mathbb R^n$. Then the pseudometric space  $(T_{\Omega},k_{T_{\Omega}})$ is non-Gromov hyperbolic.
  \end{corollary}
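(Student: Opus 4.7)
The plan is to apply the immediately preceding corollary to the domain $D=T_\Omega\subset\CC^n$ at the boundary point $z=x$ (treating $x\in\RR^n$ as a point of $\CC^n$ with zero imaginary part; since $\re x=x\in\partial\Omega$, we indeed have $x\in\partial T_\Omega$). Because $T_\Omega$ is convex the pair $(T_\Omega,x)$ satisfies (*), so the cone $C_{T_\Omega}(x)$ is well-defined, and all that is needed is to identify this cone in the form $C(e_1,\ldots,e_n,f_1,\ldots,f_n)$.

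The key computation is to show
$$C_{T_\Omega}(x)=C_\Omega(x)+i\RR^n.$$
To see this, fix $v=a+ib\in\CC^n$ with $a,b\in\RR^n$. For $w=u+iv'\in\CC^n$ close to $v$ one has $u\in\RR^n$ close to $a$, and for $t>0$
$$x+tw\in T_\Omega\iff\re(x+tw)=x+tu\in\Omega.$$
Thus $v\in C_{T_\Omega}(x)$ holds exactly when there is $\eps>0$ such that $x+tu\in\Omega$ for all $u\in\RR^n$ near $a$ and $t\in(0,\eps)$, which by definition is equivalent to $a\in C_\Omega(x)$. Hence $C_{T_\Omega}(x)=C_\Omega(x)+i\RR^n$ as claimed; the imaginary part is entirely free, which matches the general principle that tube domains are translation-invariant in the imaginary directions.

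Now substitute the hypothesis $C_\Omega(x)=(0,\infty)f_1+\ldots+(0,\infty)f_n$. Since $\{f_1,\ldots,f_n\}$ is an $\RR$-basis of $\RR^n$, we may decompose $i\RR^n=\RR(if_1)+\ldots+\RR(if_n)$, and therefore
$$C_{T_\Omega}(x)=\RR(if_1)+\ldots+\RR(if_n)+(0,\infty)f_1+\ldots+(0,\infty)f_n.$$
Set $e_j:=if_j$. Then $\{f_1,\ldots,f_n\}$ is a $\CC$-basis of $\CC^n$ (being an $\RR$-basis of $\RR^n$), and the same is true for $\{e_1,\ldots,e_n\}=\{if_1,\ldots,if_n\}$; moreover the combined family $\{e_1,\ldots,e_n,f_1,\ldots,f_n\}$ is an $\RR$-basis of $\CC^n=\RR^{2n}$. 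Thus $C_{T_\Omega}(x)=C(e_1,\ldots,e_n,f_1,\ldots,f_n)$ in the sense of the preceding corollary, which immediately yields that $(T_\Omega,k_{T_\Omega})$ is non-Gromov hyperbolic.

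No step here should be genuinely hard: the only point requiring any care is the verification of the tangent-cone identity $C_{T_\Omega}(x)=C_\Omega(x)+i\RR^n$, which is nothing more than unwinding the definition of property (\ref{equation:property-two}) in combination with the tube structure $T_\Omega=\{\re z\in\Omega\}$. Once this identification is in place, the reduction to the polydisc model via the basis choice $e_j=if_j$ is automatic.
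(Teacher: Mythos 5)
Your proposal is correct and follows exactly the route the paper intends (the paper states this corollary without proof, as an immediate application of the preceding one): you identify $C_{T_{\Omega}}(x)=C_{\Omega}(x)+i\mathbb R^n$ from the tube structure and then realize this cone as $C(e_1,\dots,e_n,f_1,\dots,f_n)$ with $e_j=if_j$, checking the required basis conditions. Nothing is missing; the only detail worth being explicit about is that property (*) for $(T_\Omega,x)$ is automatic from convexity of $T_\Omega$, which you implicitly use and which the paper also grants for all convex domains.
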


  Finally, we have a  nice necessary condition for the Gromov hyperbolicity of the Kobayashi (pseudo)distance in two-dimensional tube domains.
  \begin{corollary}\label{corollary:gromov-necessary}
  Let $\Omega\subset\mathbb R^2$ be a convex domain such that $(T_{\Omega},k_{T_{\Omega}})$ is Gromov hyperbolic. Then $\Omega$ is strictly convex and its boundary is $C^1$.
  \end{corollary}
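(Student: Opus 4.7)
I argue the contrapositive, treating the two regularity conditions separately. Since Gromov hyperbolicity of $(T_\Omega, k_{T_\Omega})$ forces $T_\Omega$ to be Kobayashi hyperbolic, $\Omega$ contains no real line. First suppose $\partial\Omega$ is not $C^1$ at some $x \in \partial\Omega$. Then $\Omega$ admits at least two distinct supporting lines at $x$, so the open convex cone $C_\Omega(x) \subset \RR^2$ is strictly contained in every half-plane. Since $\Omega$, and hence $C_\Omega(x)$, contains no real line, and since an open convex cone in the plane containing no line and strictly contained in a half-plane must take the form $(0,\infty)f_1 + (0,\infty)f_2$ for some $\RR$-basis $\{f_1,f_2\}$ of $\RR^2$, the preceding corollary yields non-Gromov hyperbolicity of $T_\Omega$, a contradiction.

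Now suppose instead that $\Omega$ is not strictly convex. By the previous paragraph we may assume $\partial\Omega$ is $C^1$, so $\partial\Omega$ contains a nontrivial segment $L$ and the cone $C_\Omega(x)$ at every $x \in L$ is simply the open half-plane bounded by the line through $L$; thus the wedge corollary is unavailable. I instead observe that $L$ produces a non-trivial analytic disc in $\partial T_\Omega$: fixing $w$ in the relative interior of $L$ and $v \in \RR^2$ parallel to $L$, the map $\phi(\lambda) = w + \lambda v$ is a non-constant holomorphic map from the vertical strip $\{\lambda \in \CC : w + (\re \lambda) v \in L\}$ into $\partial T_\Omega$, because $\re \phi(\lambda) \in L \subset \partial\Omega$ throughout. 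The presence of such a disc in the boundary of a Gromov hyperbolic convex domain is ruled out by the results of Zimmer \cite{Zim 2016a}, yielding the desired contradiction.

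The main obstacle is the second step. The wedge argument transfers non-Gromov hyperbolicity from a simplicial tangent cone via Theorem~\ref{theorem:criterion}, but a flat boundary portion yields only a half-plane tangent cone whose tube $\CC \times \DD$ is Gromov hyperbolic as a pseudometric space, so the cone method alone cannot detect flat segments. A self-contained alternative, avoiding Zimmer's input, would take a thin rectangle $R \Subset \Omega$ with one edge on $L$, so that $T_R \subset T_\Omega$ is biholomorphic to the polydisc $\DD^2$, and sandwich $k_{T_\Omega}(z,w)$ between $k_{T_R}(z,w)$ from above and $\max_j k_{\pi_j(\Omega)+i\RR}(\pi_j(z),\pi_j(w))$ from below via the coordinate projections. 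One would then position four points in $T_R$ using the $i\RR^2$-translation invariance of $k_{T_\Omega}$ so that the two sides of the sandwich agree to leading order; this should transfer the unboundedness of $S_{\DD^2}$ to $S_{T_\Omega}$, although it is delicate because $S$ is not monotone in the underlying pseudodistance.
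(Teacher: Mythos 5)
Your argument is correct and coincides with the paper's own proof: the non-$C^1$ case is handled exactly as in the paper by identifying $C_{\Omega}(x)$ as a proper wedge $(0,\infty)f_1+(0,\infty)f_2$ and invoking the preceding corollary, and the failure of strict convexity is handled by producing a non-trivial analytic disc in $\partial T_{\Omega}$ from the boundary segment and appealing to Zimmer's necessary condition (Theorem 1.6 in \cite{Zim 2016a}). The self-contained sandwich alternative you sketch for the flat-segment case is not needed and is not pursued in the paper, which is content to cite Zimmer there.
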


  \begin{proof} If $\Omega$ were not strictly convex then there would be a non-trivial line segment $I\subset\partial \Omega$. But then $\partial T_{\Omega}$ contains a non-trivial analytic disc which contradicts the necessary condition of Gromov hyperbolicity of convex domains (Theorem 1.6 in \cite{Zim 2016a}).

   If the boundary were not $C^1$ then, due to the properties of convex domains, it would not be differentiable at some $x\in\partial\Omega$ and thus there would exist two supporting lines to $\Omega$ passing through $x$ and thus $C_{\Omega}(x)$ would be $(0,\infty)f_1+(0,\infty)f_2$ with linearly independent $f_1,f_2$. It suffices to use the previous corollary to get that $(T_{\Omega},k_{T_{\Omega}})$ is  non-Gromov hyperbolic.
   \end{proof}

   \begin{remark} It would be interesting to see whether the previous corollary could be generalized to higher dimensions.

   It is also interesting to be able to formulate some sufficient conditions for the Gromov hyperbolicity of the Kobayashi distance in tube domains. It is not clear even in dimension two for tube domains with bounded bases. The example of a convex tube domain with the unbounded base $\Omega=\{x\in\mathbb R^2: x_1>x_2^2\}$, which is biholomorphic with the unbounded realization of the unit ball (Siegel domain) in
   $\mathbb C^2$ via the map $z\to \left(z_1-\frac{z_2^2}{2},\frac{z_2}{\sqrt{2}}\right)$ (compare Example 6.14 in \cite{Bra-Gau 2016}), gives a tube domain that is Gromov hyperbolic when endowed with the Kobayashi distance.
    \end{remark}

 \begin{remark}
 In this section some necessary conditions for a Gromov hyperbolicity were given which, when applied to convex domains, gave a wide class of non-Gromov hyperbolic domains. In the two-dimensional case the property which guaranteed the non-Gromov hyperbolicity is surprisingly similar to the general situation of the non-Gromov hyperbolicity of the Hilbert metric (compare Proposition 4.1 and Theorem 4.2 in \cite{Kar-Nos 2002} with our Corollary~\ref{corollary:gromov-necessary}).
 \end{remark}

    As a next example of how to apply the above criterion for non-Gromov hyperbolicity we choose another class of domains with a relatively big class of symmetry -- namely Reinhardt domains. Following the same idea as above we get the following necessary condition of Gromov hyperbolicity.

 \begin{corollary}
 Let $D\subset\mathbb C^2$ be a convex Reinhardt domain with the Minkowski functional $h$. Assume that $h$ is not $C^1$ on $\mathbb C^2\setminus\{0\}$. Then $(D,k_D)$ is non-Gromov hyperbolic.
          \end{corollary}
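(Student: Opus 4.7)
My plan is to mirror the proof of Corollary~\ref{corollary:gromov-necessary}: locate a boundary point $z^{*}\in\partial D$ at which the solid tangent cone $C_D(z^{*})$ has the form $C(e_1,e_2,f_1,f_2)$, and then invoke the previous corollary on polydisc-shaped tangent cones.

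First I would normalize. By positive homogeneity of $h$, any $z\neq 0$ at which $h$ fails $C^{1}$ can be rescaled so that $h(z)=1$, placing it on $\partial D$; by the torus action $(z_1,z_2)\mapsto (e^{i\theta_1}z_1,e^{i\theta_2}z_2)$, which leaves both $D$ and $h$ invariant, I may further rotate $z^{*}$ to have nonnegative real coordinates $z^{*}=(r_1^{*},r_2^{*})\in[0,\infty)^{2}$. Writing $h(z_1,z_2)=H(|z_1|,|z_2|)$ for the convex positively homogeneous function $H$ on $[0,\infty)^{2}$ supplied by Reinhardt invariance, the generic case is that $H$ itself is not $C^{1}$ at some $(r_1^{*},r_2^{*})$ with both coordinates strictly positive, whence the subdifferential picture for convex functions forces multiple supporting half-planes; the planar tangent cone to $\{H<1\}$ at $(r_1^{*},r_2^{*})$ is thus a genuine wedge $(0,\infty)f_1+(0,\infty)f_2$ with $\mathbb R$-linearly independent $f_1,f_2\in\mathbb R^{2}$.

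Next I would assemble the $4$-dimensional tangent cone. The torus orbit through $z^{*}$ lies in $\partial D$ and has tangent plane spanned by $e_1=(ir_1^{*},0)$ and $e_2=(0,ir_2^{*})$, so $\mathbb R e_1+\mathbb R e_2$ is contained in the linear part of $C_D(z^{*})$. Transverse to the orbit, the intersection of $D$ with the real plane $\mathbb R^{2}\subset\mathbb C^{2}$ through $z^{*}$ is exactly $\{(r_1,r_2)\in(0,\infty)^{2}:H(r_1,r_2)<1\}$ and contributes precisely the wedge $(0,\infty)f_1+(0,\infty)f_2$ from the previous paragraph. Since $r_1^{*},r_2^{*}>0$, both $\{e_1,e_2\}$ and $\{f_1,f_2\}\subset\mathbb R^{2}$ are $\mathbb C$-bases of $\mathbb C^{2}$, and $\{e_1,e_2,f_1,f_2\}$ is an $\mathbb R$-basis of $\mathbb R^{4}=\mathbb C^{2}$; assembling,
\[
C_D(z^{*})=\mathbb R e_1+\mathbb R e_2+(0,\infty)f_1+(0,\infty)f_2=C(e_1,e_2,f_1,f_2),
\]
and the previous corollary yields that $(D,k_D)$ is non-Gromov hyperbolic.

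The main obstacle is the degenerate case in which the failure of $C^{1}$ occurs only at points with one coordinate zero, for instance $h(z)=|z_1|+|z_2|$: there $H$ is smooth on the open positive quadrant and the non-$C^{1}$-ness stems purely from $|z_j|$ being non-differentiable at $z_j=0$, so the tangent cone at the axis point becomes $\{c|v_1|+\re v_2<0\}$, which has only a one-dimensional line part and is therefore not of the form $C(e_1,e_2,f_1,f_2)$. Handling this case requires an additional argument, either refining the Reinhardt structure to locate a non-$C^{1}$ point with both coordinates nonzero, or showing independently via Theorem~\ref{theorem:criterion} that the ``complex light cone'' $\{|v_1|<\re v_2\}$ is itself non-Gromov hyperbolic.
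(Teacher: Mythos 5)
Your main-case argument is exactly the paper's: the authors give no proof beyond the phrase ``following the same idea as above,'' and the intended idea is precisely yours, namely to locate a boundary point whose tangent cone has the form $C(e_1,e_2,f_1,f_2)$ and invoke the preceding corollary. Your verification of that case is correct and in fact more careful than the paper's. At a non-$C^1$ point $z^*=(r_1^*,r_2^*)$ with $r_1^*,r_2^*>0$, the local diffeomorphism $(x,\theta)\mapsto\bigl((r_1^*+x_1)e^{i\theta_1},(r_2^*+x_2)e^{i\theta_2}\bigr)$ shows $C_D(z^*)=W+i\mathbb R^2$ with $W\subset\mathbb R^2$ the planar wedge of $\{H<1\}$ at $(r_1^*,r_2^*)$, and your checks that $\{e_1,e_2\}$ and $\{f_1,f_2\}$ are both $\mathbb C$-bases are exactly the hypotheses the quadrant-cone corollary requires.

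The degenerate case you flag is a genuine gap, and it is a gap in the paper as well, not only in your write-up. It is non-vacuous: for $D=\{|z_1|+|z_2|<1\}$, or for $\{|z_1|+|z_2|^2<1\}$, the gauge $h$ fails to be $C^1$ precisely on the axes, the boundary is real-analytic off the axes (so the tangent cone there is a half-space), and at an axis point such as $(0,1)$ the tangent cone is $\{|v_1|+c\,\re v_2<0\}$, whose lineality space is the one-dimensional $\{0\}\times i\mathbb R$. Hence no boundary point of such a $D$ has a cone of the form $C(e_1,e_2,f_1,f_2)$, so your option (i) (relocating the singularity off the axes) cannot work in general, and option (ii) is the only route: one must decide whether the light cone $\Lambda=\{|v_1|<\re v_2\}$ is non-Gromov hyperbolic and then apply Theorem~\ref{theorem:criterion}. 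This is a genuinely nontrivial question: $\Lambda$ is biholomorphic to the convex Reinhardt domain $\{|z_1|+|z_2|^2<1\}$ (Cayley transform in the second variable followed by $(z_1,w)\mapsto(z_1(1+w)^2/4,w)$), a domain of exactly the type analyzed in \cite{Nik-Tho-Try 2016}, and the criterion of Theorem~\ref{theorem:criterion} only transfers non-hyperbolicity from the cone to the domain, never the reverse. Until the status of $\Lambda$ is settled, both your argument and the paper's establish the corollary only under the additional hypothesis that $h$ fails to be $C^1$ at some point with both coordinates nonzero.
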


\section{Geodesics (complex and real) in convex tube domains}
Following the ideas presented in \cite{Zim 2016a} and \cite{Zim 2016b} it is probable that a good tool for providing some sufficient conditions for the Gromov hyperbolicity would be the analysis of properties of real geodesics with respect to the Kobayashi distance. The problem of regularity of complex geodesics is also a very important one and may be used in many problems of complex analysis -- one may look for instance at papers \cite{Zim 2016a}, \cite{Zim 2016b}, \cite{Bha-Zim 2016}, \cite{Bha 2016} to name just a few that have appeared recently. The very recent papers of Zaj\c ac allow us to provide many strong regularity properties of complex geodesics in convex tube domains which also show that real geodesics are in fact much more regular (for tube domains with bounded smooth bases) than in the situation studied by Zimmer. We present these results in this section just after presenting some general properties of geodesics in convex tube domains.

 Below we shall be interested in \textit{well behaved} real geodesics, i. e. the real geodesics such that the limits $\lim_{t\to\pm 1}\gamma(t)$ exist (in the extended sense, i. e. as  elements of $\overline{D}\cup\{\infty\}$) -- the expression `a well behaved geodesic'' is taken from \cite{Zim 2016b}, up to a parametrization of a geodesic to the interval $(-\infty,\infty)$.

 \subsection{Geodesics in convex tube domains with bounded bases}
Below we use the notation from the papers \cite{Zaj 2015a} and \cite{Zaj 2015b} and we present some consequences of the results given there. We mainly concentrate on results for tube domains with bounded bases.

Crucial for the problem of regularity of complex geodesics in tube domains will be the results of Zaj\c ac which state that the mapping $h\in H^1(\DD,\mathbb C^n)$ coming up in descriptions of complex geodesics is of the form $a\lambda^2+b\lambda+\bar a$, $a\in\mathbb C^n$, $b\in\mathbb R^n$. The analogous function considered for general bounded convex  (not the strongly convex ones!) domains (whose existence is a consequence of the Lempert Theorem - see \cite{Lem 1981}, \cite{Roy-Wong 1983} or \cite{Jar-Pfl 1993}) is far from being so regular.

The theorem below follows directly from Theorem 3.3 in \cite{Zaj 2015a}, Theorem 3.1 and Remark 3.3 in \cite{Zaj 2015b}. Let us draw the attention of the Reader to the fact that we restrict ourselves to the cases of tube domains with bounded bases. This allows us, when working with complex geodesics, to restrict to boundary measures as studied by Zaj\c ac being absolutely continuous with respect to the Lebesgue measure.

For the convex domain $\Omega\subset\mathbb R^n$ and $v\in\mathbb R^n$ put
\begin{equation}
P_{\Omega}(v):=\{x\in\overline{\Omega}:\langle y-x,v\rangle<0\text{ for all $y\in\Omega$}\}\subset\partial\Omega.
\end{equation}

\begin{theorem}[see \cite{Zaj 2015a}, \cite{Zaj 2015b}] \label{theorem:zajac-characterization}
Let $\Omega\subset\mathbb R^n$ be a bounded convex domain. Then $f:\mathbb D\to T_{\Omega}$ is a complex geodesic in $T_{\Omega}$  iff there exist $a\in\mathbb C^n$, $b\in\mathbb R^n$, not both equal to $0$, and $g\in L^1(\mathbb T,\mathbb R^n)$ such that
\begin{equation}
f(\lambda)=\frac{1}{2\pi}\int_{\mathbb T}\frac{\xi+\lambda}{\xi-\lambda}g(\xi)d\mathcal L^{\mathbb T}(\xi)+i \im f(0)
\end{equation}
and $g(\lambda)\in P_{\Omega}(\bar\lambda h(\lambda))$ for a. a. $\lambda\in\mathbb T$, where $h(\lambda):=a\lambda^2+b\lambda+\bar a$.
\end{theorem}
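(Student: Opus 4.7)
The plan is to deduce the theorem from the Royden--Wong characterization of complex geodesics in convex domains (in the form used by Zaj\c ac for tube domains), exploiting the invariance of $T_\Omega$ under purely imaginary translations together with the boundedness of $\Omega$.

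\emph{Forward direction.} The Royden--Wong/Lempert criterion says: $f\in\OO(\DD,T_\Omega)$ is a complex geodesic iff there exists $h\in H^1(\DD,\CC^n)$, $h\not\equiv 0$, such that for a.a.\ $\la\in\TT$ the non-tangential limit $f^*(\la)\in\partial T_\Omega$ exists and
\[
\re\langle \bar\la h(\la),\, z-f^*(\la)\rangle\le 0 \quad\text{for every } z\in T_\Omega,
\]
where $\langle\cdot,\cdot\rangle$ is the complex-bilinear pairing. Since $T_\Omega=\Omega+i\RR^n$, testing this inequality with $z=x+iy$ for $x\in\Omega$ fixed and $y\in\RR^n$ varying forces the $y$-dependent part of the left-hand side to vanish, so $\bar\la h(\la)\in\RR^n$ for a.a.\ $\la\in\TT$. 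On $\TT$ this reads $h(\la)=\la^2\overline{h(\la)}$; expanding $h=\sum_{k\ge 0}c_k\la^k$ and matching Fourier coefficients gives $c_0=\bar c_2$, $c_1\in\RR^n$, and $c_k=0$ for $k\ge 3$. Setting $a:=c_2\in\CC^n$ and $b:=c_1\in\RR^n$ produces $h(\la)=a\la^2+b\la+\bar a$ with $(a,b)\neq 0$. Writing $g:=\re f^*$, the Royden--Wong inequality collapses to $\langle \bar\la h(\la),\,x-g(\la)\rangle\le 0$ for every $x\in\Omega$, i.e.\ $g(\la)\in P_\Omega(\bar\la h(\la))$ a.e.\ on $\TT$. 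Boundedness of $\Omega$ makes $\re f$ a bounded harmonic function on $\DD$, so $g\in L^\infty(\TT,\RR^n)\subset L^1$ and $\re f$ is the Poisson extension of $g$; the Schwarz integral
\[
F(\la):=\frac{1}{2\pi}\int_\TT\frac{\xi+\la}{\xi-\la}g(\xi)\,d\mathcal L^\TT(\xi)
\]
is holomorphic with $\re F=\re f$ and $F(0)\in\RR^n$, so $f-F$ is the purely imaginary constant $i\im f(0)$, which is the displayed representation.

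\emph{Converse and main obstacle.} Given data $(a,b,g)$ satisfying the stated conditions, define $f$ by the above Schwarz integral plus $i\im f(0)$, set $h(\la):=a\la^2+b\la+\bar a$, and verify the Royden--Wong criterion---now immediate from $g(\la)\in P_\Omega(\bar\la h(\la))$---to conclude that $f$ is a complex geodesic. The delicate step, and the main obstacle, is to show that $\re f(\la)\in\Omega$ (not merely $\overline{\Omega}$) for every $\la\in\DD$, i.e.\ to rule out that the Poisson extension of $g$ touches $\partial\Omega$ at an interior point. Since $(a,b)\neq 0$, the map $\la\mapsto\bar\la h(\la)=b+2\re(a\la)$ is non-constant on $\TT$, so the support condition forces $g$ to move non-trivially along $\partial\Omega$; applying the strong maximum principle to $\la\mapsto\langle v,\re f(\la)\rangle-\sup_{x\in\Omega}\langle v,x\rangle$ for each supporting functional $v$ of $\Omega$ then yields $\re f(\la)\in\Omega$. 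This is precisely how Theorem~3.3 of \cite{Zaj 2015a} and Theorem~3.1 with Remark~3.3 of \cite{Zaj 2015b}, cited in the statement, close the argument.
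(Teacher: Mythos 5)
The paper itself gives no proof of this theorem: it is imported verbatim from Zaj\c ac's papers (``follows directly from Theorem 3.3 in \cite{Zaj 2015a}, Theorem 3.1 and Remark 3.3 in \cite{Zaj 2015b}''), so there is no in-paper argument to compare against. Your reconstruction follows the same route as the cited source: the Royden--Wong supporting-functional criterion, the observation that invariance of $T_\Omega$ under imaginary translations forces $\bar\la h(\la)\in\RR^n$ on $\TT$, the Fourier-coefficient computation yielding $h(\la)=a\la^2+b\la+\bar a$ with $b\in\RR^n$, the identification of the boundary inequality with $g(\la)\in P_\Omega(\bar\la h(\la))$, and the Schwarz-integral representation (legitimate here because boundedness of $\Omega$ makes $\re f$ a bounded harmonic function, hence the Poisson integral of an $L^\infty$ density --- exactly the reduction the paper emphasizes when it restricts to bounded bases). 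The forward direction is correct as written. Be aware, though, that the Royden--Wong criterion in the form you invoke --- existence of $h\in H^1$ and of a.e.\ boundary values, and its sufficiency --- is classically proved for \emph{bounded} convex domains; establishing it for the unbounded tube $T_\Omega$ is precisely the substantive content of Zaj\c ac's theorems, so your argument is a derivation from those results rather than an independent proof. That is consistent with how the paper uses the statement.

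One step of your converse is genuinely wrong, although harmlessly so. You claim that $(a,b)\neq 0$ makes $\la\mapsto\bar\la h(\la)=b+2\re(a\la)$ non-constant on $\TT$: this fails when $a=0$ and $b\neq 0$, in which case the map is constantly $b$ and $g$ may be a constant point of $P_\Omega(b)\subset\partial\Omega$, so the Schwarz integral produces a constant map with real part on $\partial\Omega$ --- not a geodesic and not even a map into $T_\Omega$. Moreover, even when $a\neq 0$ the support condition does not by itself force $g$ to ``move'': a vertex of $\partial\Omega$ belongs to $P_\Omega(v)$ for a whole cone of directions $v$, which can contain the entire ellipse traced by $b+2\re(a\la)$. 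The reason none of this sinks the theorem is that the statement quantifies over maps $f:\DD\to T_\Omega$, so $\re f(\DD)\subset\Omega$ is a hypothesis in the converse, not something to be derived; your ``main obstacle'' paragraph is solving a problem the theorem does not pose, and the maximum-principle argument you offer for it does not work in the degenerate cases above. If you do want the stronger statement (every admissible triple $(a,b,g)$ yields a geodesic), you must add the hypothesis $\re f(0)\in\Omega$ --- which is exactly the condition the paper later extracts when it notes that $\re f(0)\in\Omega$ forces the spherical projection of $\la\mapsto 2\re(a\la)+b$ to be non-degenerate.
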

In the above formulation note that  $\bar\lambda h(\lambda)\in\mathbb R^n$, $\lambda\in\mathbb T$, thus the expression $P_{\Omega}(\bar\lambda h(\lambda))$ makes sense.

Note that $\re f^*(\lambda)=g(\lambda)$ for a. a. $\lambda\in\mathbb T$.

\subsection{Geodesic in convex tube domains -- general properties}\label{subsection:geodesics-general} Following the ideas of uniqueness of complex geodesics in general convex domains for the case of tube domains we get relatively easily some basic properties.

Assume that we have two distinct points $w,z\in T_{\Omega}$. In case $T_{\Omega}$ is Kobayashi hyperbolic there exists a complex geodesic passing through $w$ and $z$. In many cases it is uniquely determined.

\begin{proposition}\label{proposition:uniqueness} Let $\Omega\subset\mathbb R^n$ be a strictly convex bounded domain.
Let $w,z\in T_{\Omega}$ be two distinct points. Then, up to an automorphism of the unit disc, there is only one complex geodesic passing through $w$ and $z$.
 \end{proposition}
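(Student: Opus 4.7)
The plan is to combine the Zaj\c{a}c representation of complex geodesics in tube domains (Theorem~\ref{theorem:zajac-characterization}) with the classical convex--combination rigidity argument familiar from strictly convex bounded domains in $\CC^n$. Existence of a complex geodesic through $w$ and $z$ will be immediate: $\Omega$ bounded forces $T_\Omega$ to contain no complex line, hence $T_\Omega$ is Kobayashi hyperbolic and taut, and Lempert's Theorem applies.

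For uniqueness I would let $f,\tilde f:\DD\to T_\Omega$ be two complex geodesics passing through $w$ and $z$. After composing each on the right with a suitable automorphism of $\DD$ I may arrange $f(0)=\tilde f(0)=w$ and $f(s)=\tilde f(s)=z$ for the same $s\in(0,1)$, where $s$ is determined by $p(0,s)=k_{T_\Omega}(w,z)$. Next I form the midpoint disc $g:=\tfrac12(f+\tilde f)$. Convexity of $T_\Omega$ keeps $g$ inside $T_\Omega$; since $g(0)=w$ and $g(s)=z$, the disc $g$ is extremal for the pair $(w,z)$, and Lempert's Theorem on the convex taut domain $T_\Omega$ promotes this extremality to $g$ being itself a complex geodesic.

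Now I apply Theorem~\ref{theorem:zajac-characterization} simultaneously to $f$, $\tilde f$, and $g$. Each carries its own associated quadratic $h(\lambda)=a\lambda^2+b\lambda+\bar a$, which is not identically zero, so $\bar\lambda h(\lambda)\neq 0$ for a.e.\ $\lambda\in\TT$; consequently $P_\Omega(\bar\lambda h(\lambda))\subset\partial\Omega$, and $\re f^*$, $\re\tilde f^*$, $\re g^*$ all take values in $\partial\Omega$ almost everywhere. The pointwise identity $\re g^*(\lambda)=\tfrac12(\re f^*(\lambda)+\re\tilde f^*(\lambda))$ then expresses a point of $\partial\Omega$ as the midpoint of two points of $\partial\Omega$, which by strict convexity of $\Omega$ can happen only when the two points coincide. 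Hence $\re f^*=\re\tilde f^*$ almost everywhere on $\TT$, and the Schwarz--type integral formula in Theorem~\ref{theorem:zajac-characterization} reconstructs $f$ from $\re f^*$ up to a purely imaginary additive constant, which is pinned down by $f(0)=\tilde f(0)=w$. Therefore $f=\tilde f$, proving the proposition.

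The step I expect to require the most care is the upgrade from ``$g$ is an extremal holomorphic disc'' to ``$g$ is a complex geodesic'' in the sense of Theorem~\ref{theorem:zajac-characterization} (i.e.\ admits a holomorphic left inverse), since only the latter form of disc is covered by Zaj\c{a}c's boundary description. This is handled by Lempert's equality $k_{T_\Omega}=l_{T_\Omega}$ on the convex taut $T_\Omega$ together with the standard fact that on such a domain every extremal for the Lempert function is a complex geodesic. Once that point is granted, the strict convexity of $\Omega$ finishes the job through the one-line midpoint argument on $\partial\Omega$.
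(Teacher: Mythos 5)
Your proposal is correct and follows essentially the same route as the paper's own proof: normalize both geodesics at $0$ and $s$, average them, use convexity plus Lempert to see the average is again a complex geodesic, invoke the Zaj\c{a}c boundary description to place all three radial limits of the real parts on $\partial\Omega$ a.e., apply strict convexity to force $\re f^*=\re\tilde f^*$, and recover $f=\tilde f$ from the Schwarz--Poisson representation together with $f(0)=\tilde f(0)$. The only difference is that you spell out the extremal-disc-to-geodesic upgrade, which the paper leaves implicit.
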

 \begin{proof} Let $f,g:\mathbb D\to T_{\Omega}$ be complex geodesics such that $f(0)=g(0)=w$, $f(s)=g(s)=z$, $s\in(0,1)$. Then (e. g. see Theorem~\ref{theorem:zajac-characterization} and remark after it) the radial limits $\re f^*$ and $\re g^*$ exist and belong to $\partial\Omega$ a. e. on $\mathbb T$. But then the function $\frac{f+g}{2}:\mathbb D\to T_{\Omega}$ is also a geodesic (passing through $w$ and $z$) and the radial limit $\frac{\re (f+g)^*}{2}$ exists and belongs to $\partial \Omega$ a. e. on $\mathbb T$. Then the strict convexity of $\Omega$ implies that $\re f^*=\re g^*$ a. e. on $\mathbb T$. But the real parts of radial limits determine uniquely the complex geodesics (use Theorem~\ref{theorem:zajac-characterization}), so $f\equiv g$ -- a contradiction.
 \end{proof}

\begin{proposition}
Let $\Omega\subset\mathbb R^n$ be a convex domain that contains no real line. Let $w,z\in\Omega=T_{\Omega}\cap\mathbb R^n$ be distinct. Then there is a complex geodesic $f:\mathbb D\to T_{\Omega}$ such that $f(0)=w$, $f(s)=z$, $s\in(0,1)$ and $f((-1,1))\subset\mathbb R^n$. In particular, for any distinct $w,z\in\Omega\subset T_{\Omega}\cap\mathbb R^n$ there is always a real geodesic (for $T_{\Omega}$) passing through $w,z$ lying entirely in $\Omega\subset\mathbb R^n$.
 \end{proposition}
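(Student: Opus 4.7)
The plan is to produce the desired geodesic by symmetrizing an arbitrary complex geodesic under the natural anti-holomorphic involution $z\mapsto\bar z$ of $T_{\Omega}$ and then averaging. Since $\Omega$ contains no real line, $T_{\Omega}$ is Kobayashi hyperbolic, so Lempert's theorem supplies a complex geodesic $f:\mathbb D\to T_{\Omega}$ with $f(0)=w$ and $f(s)=z$ for some $s\in(0,1)$.

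Next I would set $\tilde f(\lambda):=\overline{f(\bar\lambda)}$. This is holomorphic as a composition of two anti-holomorphic maps, and it takes values in $T_{\Omega}$ because $\re\bar z=\re z$, so $T_{\Omega}$ is invariant under componentwise complex conjugation. Since $w,z\in\mathbb R^n$ and the parameters $0,s$ are real, $\tilde f(0)=\bar w=w$ and $\tilde f(s)=\overline{f(s)}=\bar z=z$.

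Then I would form the average $F:=(f+\tilde f)/2$. Convexity of $\Omega$ (hence of $T_{\Omega}$) shows that $F:\mathbb D\to T_{\Omega}$ is holomorphic, with $F(0)=w$ and $F(s)=z$. To promote $F$ to a complex geodesic, observe that the existence of $f$ gives $k_{T_{\Omega}}(w,z)=p(0,s)$, while the Lempert theorem equates $l_{T_{\Omega}}$ with $k_{T_{\Omega}}$ on the convex domain $T_{\Omega}$; hence $l_{T_{\Omega}}(w,z)=p(0,s)$, and $F$ already attains this infimum, which forces it to be extremal, i.e.\ a complex geodesic. Finally, for real $t\in(-1,1)$ one has $\tilde f(t)=\overline{f(t)}$, so $F(t)=\re f(t)\in\mathbb R^n$; combined with $F(t)\in T_{\Omega}$, this gives $F(t)\in T_{\Omega}\cap\mathbb R^n=\Omega$. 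Restricting $F$ to $(-1,1)$ then produces the required real geodesic lying entirely in $\Omega$, in view of the general fact (recalled in the introduction) that the restriction of a complex geodesic to $(-1,1)$ is a real geodesic.

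There is no serious obstacle; the only step worth pausing on is the automatic geodesicity of $F$, which is not built into the averaging construction but rather comes for free from the equality $l_{T_{\Omega}}=k_{T_{\Omega}}$ for convex domains, forcing any holomorphic disc that sends $0\mapsto w$ and $s\mapsto z$ to be extremal.
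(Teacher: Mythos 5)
Your proposal is correct and follows essentially the same route as the paper: the paper's proof also takes an arbitrary complex geodesic $g$ through $w,z$ and replaces it by $\lambda\mapsto\frac{g(\lambda)+\overline{g(\overline{\lambda})}}{2}$. Your additional justification that the averaged disc remains a complex geodesic (via extremality for the Lempert function on a convex domain) fills in a step the paper leaves implicit, but it is the same argument.
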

 \begin{proof}
 Let $g:\mathbb D\to T_{\Omega}$ be a complex geodesic such that $g(0)=w$, $g(s)=z$, $s\in(0,1)$. Then the mapping
 $f:\mathbb D\to T_{\Omega}$ defined by the formula $f(\lambda):=\frac{g(\lambda)+\overline{g(\overline{\lambda})}}{2}$, $\lambda\in\mathbb D$, is also a complex geodesic satisfying the desired property.
 \end{proof}

\subsection{Complex geodesics in tube domains over convex, smooth and bounded bases}
In the construction below we present how Theorem~\ref{theorem:zajac-characterization}  implies the continuity of complex (and real) geodesics up to the boundary.

Below we shall assume additionally that $\Omega$ has $C^k$ boundary ($k\geq 2$) and is strictly convex. Note that in such a case we have the following properties.
For any $v\in\mathbb C^n\setminus\{0\}$ the set $P_{\Omega}(v)$ contains exactly one point. Moreover, for $x\in\partial\Omega$ we get that $P_{\Omega}(\nu_{\Omega}(x))=\{x\}$, where $\nu_{\Omega}(x)=\frac{\nabla\rho(x)}{||\nabla\rho(x)||}$, where $\rho$ is the defining function of $\Omega$ near $x$, denotes the unit outer normal vector to $\partial \Omega$ at $x$. This gives us the following mapping

\begin{equation}
\Phi:\partial \Omega\owns x\to\nu_{\Omega}(x)\in \mathbb S^{n-1}\subset\mathbb R^n,
\end{equation}
which is $C^{k-1}$-smooth, injective and onto (here we need the strict convexity of $\Omega$, its boundedness and smoothness!). Consequently, $\Phi$ is a $C^{k-1}$-diffeomorphism.

In such a situation we see that $P_{\Omega}(w)=\Phi^{-1}(\{w\})$, $w\in\mathbb S^{n-1}$.

Therefore, in the situation as in Theorem \ref{theorem:zajac-characterization}, we have
\begin{equation}\label{equation:function-P}
P_{\Omega}(\bar\lambda h(\lambda))=\Phi^{-1}\left(\frac{\bar\lambda h(\lambda)}{||\bar\lambda h(\lambda)||}\right)
\end{equation}
for a. a. $\lambda\in\mathbb T$.

Note that $\bar\lambda h(\lambda)=2\re(a\lambda)+b$, $\lambda\in\mathbb T$. Therefore,
the expression on the right side of (\ref{equation:function-P}) is well defined for all but at most two points $\lambda\in\mathbb T$.

Consequently, we have

\begin{equation}
\re f^*(\lambda)=g(\lambda)=\Phi^{-1}\left(\frac{\bar\lambda h(\lambda)}{||\bar\lambda h(\lambda)||}\right)=\Phi^{-1}\left(\frac{2\re(a\lambda)+b}{||2\re(a\lambda)+b||}\right)
\end{equation}
for a. a. $\lambda\in\mathbb T$.

Therefore, for the form of geodesics (and their regularity) the behaviour of the projection onto $\mathbb S^{n-1}$ of the following mapping is crucial
\begin{equation}
\tilde F:\mathbb T\owns\lambda\to 2\re(a\lambda)+b\in\mathbb R^n.
\end{equation}
Recall that the above mapping (so the chosen $a,b$) have to be such that it is not identically equal to $0$. Moreover, the assumption $\re f(0)\in\Omega$ implies that the projection of the image of the above mapping onto $\mathbb S^{n-1}$ is not a singleton.

The detailed study of the form of the above mapping gives the following possibilities for the mapping (defined for all but at most two elements of $\mathbb T$) $F:=\frac{\tilde F}{||\tilde F||}$,
\begin{equation}
F:\mathbb T\owns\lambda\to\frac{\bar\lambda h(\lambda)}{||\bar\lambda h(\lambda)||}\in\mathbb S^{n-1}.
\end{equation}
The situations which we list below reflect the fact that the image of the mapping $\tilde F:\mathbb T\owns\to2\re(a\lambda)+b$ has two possibilities with further subcases. First note that the image of the mapping is either an ellipse (coplanar with the origin or not), which is the case when the vectors $\re a$ and $\im a$ are $\mathbb R$--linearly independent or a closed line segment (with the origin lying in the segment or not).

Note that the case when $\tilde F$ has the image being the line segment with $0$ lying in its boundary is impossible because then the mapping $f$ would be constant. Below we present all the possibilities we have to study
\begin{itemize}
\item $F$ is a linear embedding of the circle into a circle,\\
\item $F$ is a real analytic mapping with the image being the (closed) smaller arc of a great circle,\\
\item $F:\mathbb T\setminus\{\lambda_0\}\to\mathbb S^{n-1}$ is a real analytic diffeomorphism onto the image being the big open semicircle such that
\begin{equation}
\lim_{t\to t_0^+}F(e^{it})=-\lim_{t\to t_0^-}F(e^{it}),\; \lambda_0=e^{it_0},
\end{equation}
\\
\item $F:\mathbb T\setminus\{\lambda_0,\lambda_1\}\to\mathbb S^{n-1}$ is constant on the two connected components (arcs) of $\mathbb T\setminus\{\lambda_0,\lambda_1\}$ and the two values are opposite.
\end{itemize}
Let us underline here that it also follows from the definition (by the appropriate choice of $a,b$) that all the possibilities listed above do occur.

Note also that the singular points ($\lambda_0,\lambda_1$ from the above description) are the ones such that $\tilde F(\lambda)=0$).

Now the composition of $F$ with the diffeomorphism $\Phi^{-1}$ gives the radial limit function $\re f^*$, which preserves the $C^{k-1}$-smoothness on the boundary (with the exception of $\lambda_0,\lambda_1$). In the theorem below we present a continuity result of complex (and consequently also of real) geodesics in $T_{\Omega}$ up to the boundary, which we formulate only for $C^2$-smooth domains. A more general result with some smoothness up to the boundary (depending on $k$) is also possible but we restrict ourselves to continuity results for which $C^2$-smoothness is sufficient. And although it could be conceivable that results on continuous extension of geodesics could be obtained with simpler tools we presented a more general attitude so that it could be used in other, potentially more refined, applications.

\begin{theorem}\label{theorem:geodesic-continuity} Let $\Omega\subset\mathbb R^n$ be a strictly convex, bounded $C^2$-smooth domain. Let $f:\mathbb D\to T_{\Omega}$ be a complex geodesic. Then only the following may happen
\begin{itemize}
\item $f$ extends to a continuous mapping on $\bar{\mathbb D}$ with $(\re f)(\mathbb T)\subset\partial\Omega$,\\
\item there is a $\lambda_0=e^{it_0}\in\mathbb T$ such that $f$ extends to a continuous mapping on $\overline{\mathbb D}\setminus\{\lambda_0\}$ (denoted again by the same symbol $f$), the limits $x_+:=\lim_{t\to t_0^+}\re f(e^{it})$, $x_-:=\lim_{t\to t_0^-}\re f(e^{it})$ exist, $x_+\neq x_-$, and they are both from $\partial\Omega$. Moreover, the following limits exist and satisfy the additional properties:  $\lim_{r\to 1^-}\re f(re^{it_0})\in[x_-,x_+]$,
$\lim_{r\to 1^-}\im f(re^{it_0})$ equals $\infty$ or $-\infty$,\\
\item there are distinct points $\lambda_0,\lambda_1\in\mathbb T$ such that $f$ extends to a continuous mapping on $\overline{\mathbb D}\setminus\{\lambda_0,\lambda_1\}$, $\re f$ attains two different values $x_0,x_1\in\partial\Omega$ on $\mathbb T\setminus\{\lambda_0,\lambda_1\}$,
\begin{equation}
\lim_{r\to 1^-}\re f(\gamma(r)),\lim_{r\to 1^+}\re f(\gamma(r))\in[x_0,x_1]
\end{equation}
 and limits $\lim_{r\to \pm1}\im f(\gamma(r))$ exist and one of them is $\infty$ and the other one $-\infty$.\\
In the formula above $\gamma(r)=a(r)$, $r\in(-1,1)$, where $a$ is an automorphism of $\mathbb D$ such that $a(-1)=e^{it_0}=\lambda_0$, $a(1)=e^{it_1}=\lambda_1$.
\end{itemize}
In particular, all the real geodesics in the domain $T_{\Omega}$ behave well and their limits (at $\pm 1$ are different).

Additionally, all the possibilities listed above do occur.
\end{theorem}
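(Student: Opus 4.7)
The plan is to combine Theorem~\ref{theorem:zajac-characterization} with the classification of the map $F:\mathbb T\to\mathbb S^{n-1}$ into the four types carried out in the discussion preceding the theorem. Composing $F$ with the $C^{k-1}$-diffeomorphism $\Phi^{-1}:\mathbb S^{n-1}\to\partial\Omega$ gives the boundary value function $g=\re f^*:\mathbb T\to\partial\Omega$, and then $f$ is (up to an additive imaginary constant) reconstructed from $g$ via the Schwarz integral in Zaj\c ac's representation; so the extension of $f$ to $\partial\mathbb D$ reduces entirely to the boundary behavior of that integral. In the first two types of $F$ (image a topological circle on $\mathbb S^{n-1}$, or a closed smaller arc of a great circle), $g$ is $C^{k-1}$-smooth on all of $\mathbb T$ with image in $\partial\Omega$; classical boundary theory for the Poisson integral then gives continuous extension of $\re f$ and, thanks to Hölder regularity of $g$, of its harmonic conjugate $\im f$ as well, to the whole of $\overline{\mathbb D}$. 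This yields the first case of the theorem.

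In the third and fourth types, $F$ has, respectively, one or two jump discontinuities on $\mathbb T$ between antipodal unit vectors; these correspond via $\Phi^{-1}$ to jumps of $g$ between two points $x_\pm\in\partial\Omega$ (respectively between $x_0,x_1\in\partial\Omega$). Away from the exceptional set the argument of the previous paragraph still applies, so $f$ extends continuously to $\overline{\mathbb D}\setminus\{\lambda_0\}$ (resp.\ $\overline{\mathbb D}\setminus\{\lambda_0,\lambda_1\}$) with $\re f$ landing in $\partial\Omega$. At a jump point, the standard Poisson--Hilbert theory gives (i) the radial limit of $\re f$ as a convex combination of the one-sided boundary values, hence lying in $[x_-,x_+]$ (resp.\ $[x_0,x_1]$), and (ii) a logarithmic singularity of the harmonic conjugate whose norm diverges as $r\to 1^-$, with the direction of divergence determined by the jump vector. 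In the two-singularity case the two jump vectors are negatives of each other by the antipodal property in type~4, so after selecting the automorphism $a$ of $\mathbb D$ sending $\pm 1$ to $\lambda_0,\lambda_1$ the two divergences are in opposite senses, which is exactly the $+\infty$ versus $-\infty$ dichotomy asserted in the third case.

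For the ``in particular'' assertion, Proposition~\ref{proposition:uniqueness} shows that every real geodesic has the form $f\circ a|_{(-1,1)}$ for some complex geodesic $f$ and automorphism $a$ of $\mathbb D$; combined with the trichotomy above, the limits at $\pm 1$ exist in $\overline{T_\Omega}\cup\{\infty\}$ and are distinct. Each of the three cases is realized by a concrete choice of $(a,b)$ in Zaj\c ac's representation, distinguishing whether the curve $\tilde F(\lambda)=2\re(a\lambda)+b$ avoids the origin, meets it tangentially at one point of $\mathbb T$, or has a segment image passing through the origin as an interior point. The hard part is a rigorous description of the logarithmic blowup of the harmonic conjugate at jump points together with the tracking of the direction of divergence as a vector in $\mathbb R^n$; this requires a careful Hilbert-transform analysis applied to $\Phi^{-1}\circ F$ near a singular point, the key observation being that the sign of the divergence is rigidly tied to the orientation of the jump vector $x_+-x_-$, which in the two-jump case is what forces the two endpoints of the corresponding real geodesic to escape to opposite infinities.
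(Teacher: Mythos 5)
Your proposal follows essentially the same route as the paper's own proof: reduce everything to the boundary function $\re f^*=\Phi^{-1}\circ F$ via Zaj\c ac's representation, use the smoothness of $\Phi^{-1}\circ F$ away from the (at most two) zeros of $\tilde F$ together with standard conjugate-function/Poisson-kernel theory to get continuous extension there, and analyze the Schwarz integral at the jump points to obtain the radial limit of $\re f$ as a value between the one-sided boundary values and the divergence of $\im f$ dictated by the jump. The only cosmetic difference is that the paper disposes of the two-singularity case by noting the integral can be computed explicitly for a two-valued step function, whereas you apply the general jump analysis at each singular point; both are correct and at the same level of rigor as the published argument.
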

\begin{proof}
Note that $\re f^*=\Phi^{-1}\circ F$ a. e. on $\mathbb T$. The latter is $C^1$ on $\mathbb T$ (with the exception of at most two points $\lambda_0,\lambda_1$).

The continuity of the mapping $f$ on $\overline{\mathbb D}$ without at most two singular points follows from the fact that $\re f^*$
is $C^1$ (it would be sufficient if it were Dini continuous).  This follows form the general theory of conjugation operators on harmonic functions (the standard reference is \cite{Dur 1970} or \cite{Gar 2007}).

The existence of limits follows from the standard reasoning on the boundary regularity properties of the functions defined by the Poisson kernel.

As to the existence of limits of real geodesics there is nothing to do when there is no singularity in $\mathbb T$ or there are two of them (then the function may be calculated explicitly). Therefore, it is sufficient to consider the existence of the following limits (both real and imaginary part)
\begin{equation}
\lim_{r\to 1^-}\frac{1}{2\pi}\int_{\mathbb T}\frac{e^{it}+r}{e^{it}-r}u(e^{it})d\mathcal L^{\mathbb T}(e^{it}),
\end{equation}
where $u:\mathbb T\setminus\{1\}\to\mathbb R$ is continuous and $0<u^{-}(1)<u^{+}(1)$, where $u^{\pm}(1):=\lim_{t\to 0^{\pm}}u(e^{it})$. It follows from the fact that the real part of the expression above gives the value of the solution of the Dirichlet problem with the boundary data $u$ that the limit of the real part exists and is a number lying in the interval $(u_-(1),u^+(1))$. As to the imaginary part note that we may assume that $u(t)>3\delta>0$ whereas $0<u(-t)<\delta$ for $t\in (0,t_0)$ and the imaginary part of the integrand is $\frac{-2\sin t u(e^{it})}{\sqrt{1+r^2-2r\cos t}}$. Simple analysis gives the desired limit equal to $-\infty$.
\end{proof}
\begin{remark}
The above theorem gives a precise description of the regularity of complex geodesics. It also gives the proof of the well behavior of all real geodesics in domains considered in the theorem. Recall that in \cite{Zim 2016b} such a phenomenon is shown for locally $m$-convex domains (Corollary 7.10 there). Our result applies to more general domains (though restricted to the special case of tube domains). It also gives much more information as to the continuity property of the complex geodesics is concerned - note that the tube domains (even with bounded bases) are neither bounded nor smooth.
\end{remark}

Let us close this subsection with a result on the existence of complex geodesics passing through points from the boundary (compare similar results on strictly linearly convex domains, e. g. \cite{Cha-Hu-Lee 1988}).

\begin{proposition} Let $\Omega\subset\mathbb R^n$ be as in the previous theorem. Let $x,y\in\partial\Omega$ be distinct. Then there is a complex geodesic $f:\mathbb D\to T_{\Omega}$ that extends continuously through $\pm 1$ and such that $f(-1)=x$, $f(1)=y$.
\end{proposition}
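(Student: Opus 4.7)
My plan is to construct the desired complex geodesic directly by specifying its Zaj\c ac parameters so that its boundary values at $\pm 1$ are prescribed to be $x$ and $y$, rather than to argue by a limiting procedure.

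First, I would set
$$a:=\frac{\nu_\Omega(y)-\nu_\Omega(x)}{4}\in\mathbb R^n\subset\mathbb C^n,\qquad b:=\frac{\nu_\Omega(y)+\nu_\Omega(x)}{2}\in\mathbb R^n,$$
and $h(\lambda):=a\lambda^2+b\lambda+\bar a$. Since $\Phi\colon\partial\Omega\to\mathbb S^{n-1}$ is a bijection (this uses strict convexity, smoothness and boundedness of $\Omega$) and $x\neq y$, the two normals differ, hence $a\neq 0$ and in particular $(a,b)\neq(0,0)$. For $\lambda=e^{it}\in\mathbb T$ a direct computation gives
$$\bar\lambda h(\lambda)=2\cos(t)\,a+b=\tfrac{1+\cos t}{2}\nu_\Omega(y)+\tfrac{1-\cos t}{2}\nu_\Omega(x),$$
a convex combination of $\nu_\Omega(x)$ and $\nu_\Omega(y)$. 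The point is that, regardless of whether these two unit vectors are linearly independent or antipodal, this expression equals $\nu_\Omega(y)$ at $\lambda=1$ and $\nu_\Omega(x)$ at $\lambda=-1$, so it is nonzero in a neighbourhood of $\pm 1$ on $\mathbb T$. Consequently $F:=\bar\lambda h(\lambda)/\|\bar\lambda h(\lambda)\|$ and $g:=\Phi^{-1}\circ F$ are $C^1$-smooth near $\pm 1$ (in the antipodal case the two singular points of $F$ occur at $\lambda=\pm i$, away from $\pm 1$).

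Then I would define
$$f(\lambda):=\frac{1}{2\pi}\int_{\mathbb T}\frac{\xi+\lambda}{\xi-\lambda}\,g(\xi)\,d\mathcal L^{\mathbb T}(\xi),\qquad\lambda\in\mathbb D,$$
and invoke the sufficiency direction of Theorem~\ref{theorem:zajac-characterization} to conclude that $f$ is a complex geodesic of $T_\Omega$, once I verify that $\re f(\mathbb D)\subset\Omega$. For this verification I would use the fact that $g$ attains the two distinct values $x$ and $y$ on $\mathbb T$, so it is not essentially constant; applied to an arbitrary supporting hyperplane of $\overline\Omega$, the corresponding linear functional composed with $\re f$ is a nonconstant harmonic function with boundary values $\le 0$, so the maximum principle together with strict convexity of $\Omega$ forces $\re f(\mathbb D)\subset\Omega$. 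Since $x,y\in\partial\Omega$ are attained only at the boundary parameters, this step is exactly where strict convexity enters crucially.

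Finally, I would check the boundary values. As $a,b$ are real, $\bar\lambda h(\lambda)$ depends only on $\re\lambda$; therefore $g$ satisfies $g(\bar\xi)=g(\xi)$, and $f$ is real on $(-1,1)$. The $C^1$-smoothness of $g$ near $\pm 1$ guarantees, via the standard boundary regularity of Poisson and conjugate-Poisson integrals (as invoked in the proof of Theorem~\ref{theorem:geodesic-continuity}), that $f$ extends continuously to $\pm 1$; the real parts of these limits are $g(\pm 1)$, and by continuity from the real interval the imaginary parts must vanish, giving
$$f(1)=g(1)=\Phi^{-1}(\nu_\Omega(y))=y,\qquad f(-1)=g(-1)=\Phi^{-1}(\nu_\Omega(x))=x.$$
The main obstacle is the verification that $\re f$ lands in $\Omega$ rather than just $\overline\Omega$, which couples the strict convexity of $\Omega$ with the maximum principle for harmonic functions valued in convex sets; handling the antipodal subcase $\nu_\Omega(x)=-\nu_\Omega(y)$ uniformly is a minor but essential point so that $\pm 1$ remain regular points of $g$.
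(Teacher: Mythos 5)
Your proof is correct and takes essentially the same route as the paper: the paper likewise chooses $(a,b)$ via the linear isomorphism $(a,b)\mapsto(2a+b,-2a+b)$ so that $F(\pm 1)$ equal the prescribed unit normals $\nu_\Omega(x),\nu_\Omega(y)$, and then appeals to the description of geodesics in the preceding subsection. You merely spell out the details the paper leaves implicit, namely the verification via the maximum principle and strict convexity that $\re f(\mathbb D)\subset\Omega$, and the symmetry argument giving the boundary values at $\pm 1$.
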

\begin{proof}
It follows from the description of geodesics in the previous subsection that such a geodesic would exist if for any distinct $u,v\in \mathbb S^{n-1}\subset\mathbb R^n$ there would exist $a\in\mathbb C^n$, $b\in\mathbb R^n$ such that $F(-1)=u$, $F(1)=v$, where $F(\lambda)=\frac{2\re (a\lambda)+b}{||2\re(a\lambda)+b||}$. Since the mapping
\begin{equation}
\mathbb R^n\times\mathbb R^n\owns(a,b)\to(2a+b,-2a+b)\in\mathbb R^n\times\mathbb R^n
\end{equation}
is an isomorphism we easily find $a,b$ as required (we may even take $a\in\mathbb R^n$).
\end{proof}

\subsection{Geometry of convex domains}\label{subsection:geometry-convex}
As we saw in Subsection~\ref{subsection:geodesics-general} there is a sense in  considering the restriction of the Kobayashi pseudodistance in tube domains to the base. In other words it could potentially be interesting to understand what the properties ${k_{T_{\Omega}}}_{|\Omega\times\Omega}$ has and what its relations with other naturally equipped metrics are. We present some properties which also may have future applications.

First recall the notion of locally $m$-convex set. It is defined in the complex setting. But it may also be defined in the real setting.

For domain $D\subset\mathbb K^n$ ($\mathbb K=\mathbb C$ or $\mathbb R$), $p\in D$, non-zero $v\in\mathbb K^n$ we define \begin{multline}
\delta_D(p):=\inf\{||x-p||:x\in\mathbb K^n\setminus D\},\\
\delta_D(p;v):=\inf\{||x-p||:x\in(p+\mathbb K v)\cap(\mathbb K^n\setminus D)\}.
\end{multline}
We call a proper convex domain $D\subset\mathbb K$ \textit{locally $m$-convex} if for every $R>0$ there is a $C>0$ such that $\delta_D(p;v)\leq C\delta_D(p)^{1/m}$ for all $p\in D$, $||p||<R$ and non-zero $v\in\mathbb K^n$.

  We have the following property for the convex domain $\Omega\subset\mathbb R^n$:

$T_{\Omega}$ is locally $m$-convex iff $\Omega$ is locally $m$-convex.

This shows that the continuity of real geodesics in (some) convex tube domains may also be deduced from results in \cite{Zim 2016a}.

Recall that for a convex bounded domain $\Omega\subset\mathbb R^n$ one may define \textit{the Hilbert metric} as follows. Let $x,y\in\Omega$ be distinct and let $\alpha,\beta\in\partial\Omega$ be points from $\partial\Omega$ which are points of intersections of the line passing through $x$ and $y$. Let $\alpha$ be the point closer to $x$ and $\beta$ the one closer to $y$. Define
\begin{equation}
h_{\Omega}(x,y):=\log\frac{||x-\alpha||\cdot ||y-\beta||}{||x-\beta||\cdot ||y-\alpha||}.
\end{equation}
Additionally we put $h_{\Omega}(x,x):=0$, $x\in\Omega$. Then $(\Omega,h_{\Omega})$ is a metric space. We have the following relation of $h_{\Omega}$ with the Kobayashi distance $k_{T_{\Omega}}$

\begin{proposition} Let $\Omega\subset\mathbb R^n$ be a bounded convex domain. Then
\begin{equation}
h_{\Omega}(x,y)\geq2 k_{T_{\Omega}}(x,y),\;(x,y\in\Omega\subset T_{\Omega}\cap\mathbb R^n.
\end{equation}
\end{proposition}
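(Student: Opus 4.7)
The plan is to exhibit a single explicit holomorphic disc in $T_{\Omega}$ that passes through $x$ and $y$, and to appeal to the contractivity of the Kobayashi pseudodistance under holomorphic maps. Let $\alpha,\beta\in\partial\Omega$ be the two points where the real affine line through $x$ and $y$ meets $\partial\Omega$, with $\alpha$ closer to $x$; write $x=\alpha+r_x(\beta-\alpha)$ and $y=\alpha+r_y(\beta-\alpha)$ for the unique numbers $0<r_x<r_y<1$. The key candidate is the affine map
\[
F\colon\mathbb{D}\to\mathbb{C}^n,\qquad F(\lambda):=\frac{\alpha+\beta}{2}+\lambda\cdot\frac{\beta-\alpha}{2},
\]
i.e.\ the affine disc centered on the midpoint of the chord $[\alpha,\beta]$.

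The first step is to verify that $F(\mathbb{D})\subset T_{\Omega}$. Since $\alpha,\beta\in\mathbb{R}^n$, one has
\[
\re F(\lambda)=\tfrac{1-\re\lambda}{2}\,\alpha+\tfrac{1+\re\lambda}{2}\,\beta,
\]
which is a proper convex combination of $\alpha$ and $\beta$ for every $\lambda\in\mathbb{D}$, and hence a point of the open chord $(\alpha,\beta)\subset\Omega$ by convexity of $\Omega$. A direct check shows $F(2r_x-1)=x$ and $F(2r_y-1)=y$, with both preimages real and lying in $(-1,1)\subset\mathbb{D}$. The contractivity of $k$ under $F$ then gives
\[
k_{T_{\Omega}}(x,y)\;\le\;k_{\mathbb{D}}(2r_x-1,\,2r_y-1).
\]

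It only remains to identify the right-hand side with $\tfrac12 h_{\Omega}(x,y)$. The standard M\"obius formula for the Poincar\'e distance between two real points of $\mathbb{D}$ gives, after a short simplification, $k_{\mathbb{D}}(2r_x-1,\,2r_y-1)=\tfrac12\log\bigl(r_y(1-r_x)/(r_x(1-r_y))\bigr)$. On the other hand, plugging the factored distances $\|x-\alpha\|=r_x\|\beta-\alpha\|$, $\|x-\beta\|=(1-r_x)\|\beta-\alpha\|$, $\|y-\alpha\|=r_y\|\beta-\alpha\|$, $\|y-\beta\|=(1-r_y)\|\beta-\alpha\|$ into the Hilbert cross-ratio yields $h_{\Omega}(x,y)=\log\bigl(r_y(1-r_x)/(r_x(1-r_y))\bigr)$, exactly twice the previous expression. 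Combining, $2\,k_{T_{\Omega}}(x,y)\le h_{\Omega}(x,y)$. The only non-routine point of the argument is choosing $F$ correctly: centering it on $(\alpha+\beta)/2$ and giving it ``radius'' $(\beta-\alpha)/2$ simultaneously forces the inclusion $F(\mathbb{D})\subset T_{\Omega}$ (via the proper convex combination) and ensures that $x,y$ have real preimages — after which the remaining work is the contraction principle together with the well-known identification of the M\"obius cross-ratio in $\mathbb{D}$ with the Hilbert cross-ratio on the segment $(\alpha,\beta)$.
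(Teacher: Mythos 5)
Your proof is correct and is essentially the paper's own argument: the paper normalizes the chord so that $\beta=-\alpha$ and uses the very same affine disc $\lambda\mapsto\lambda\alpha$ (your $F$ after that normalization), together with contractivity of $k$ and the identification of the Poincar\'e distance between two real points of $\mathbb D$ with half the Hilbert cross-ratio. The only discrepancy is notational: you implicitly use the standard, nonnegative form of the Hilbert cross-ratio, which is clearly what the paper intends even though its displayed definition of $h_{\Omega}$ has the numerator and denominator interchanged.
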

\begin{proof}
Let $x,y\in\Omega$ be distinct. Let $\alpha,\beta$ be as in the definition of the Hilbert metric. Without loss of generality $\beta=-\alpha$. Put $x=t\alpha$, $y=s\alpha$. Then $-1<s<t<1$. We then have
\begin{equation}
h_{\Omega}(x,y)=\log\frac{(1-t)(1+s)}{(1+t)(1-s)}=2k_{\mathbb D}(s,t)\geq 2k_{T_{\Omega}}(s\alpha,t\alpha)=2k_{T_{\Omega}}(x,y).
\end{equation}
\end{proof}

\section{Is the Kobayashi hyperbolic convex domain biholomorphic to a bounded convex domain?}
A convex domain is linearly isomorphic with the product of $\mathbb C^k$ and some Kobayashi hyperbolic convex domain. Additionally, a convex Kobayashi hyperbolic domain is biholomorphic to a bounded domain. But whether one may take as the last domain a convex one is not known (see \cite{For-Kim 2015}). With any convex domain $D\subset\mathbb R^n$ we may relate the convex cone $S(D):=\{v\in\mathbb R^n:a+[0,\infty)v\subset D\}$ with some (equivalently, any) $a\in D$ (see e. g. \cite{Zwo 1999}, \cite{Zaj 2015b}).

The Kobayashi hyperbolicity of the convex domain $D\subset\mathbb C^n$ is in this language equivalent to the fact that $S(D)$ contains no complex line. It is however possible that $S(D)$ contains an $n$-dimensional real space (as a subspace of $\mathbb R^{2n}=\mathbb C^n$) and no complex line. In such a case we shall deal (up to a affine complex isomorphism) with the Kobayashi hyperbolic tube domain.

One may try to study the problem mentioned above by considering the more and more complex structure of $S(D)$. The case $S(D)=\{0\}$ is equivalent to the fact that $D$ is bounded. The second simplest case ($S(D)$ being a half line) seems to be already a problem. In such a case (even a little more general) we may however find a weaker answer. Namely, the domain is then biholomorphic to a bounded $\mathbb C$-convex domain. More precisely, we have the following proposition whose proof actually follows the idea presented in \cite{Zim 2016b}.

\begin{proposition} Let $D$ be a hyperbolic convex domain in $\mathbb C^n$ such that $S(D)$  is properly contained in real two-dimensional half space. Then $D$ is biholomorphic with the bounded $\mathbb C$-convex domain.
\end{proposition}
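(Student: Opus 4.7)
The plan is to construct an explicit projective (Möbius) transformation of $\CC\pp^n$ that biholomorphically carries $D$ onto a bounded domain; since projective automorphisms send complex lines to complex lines, they preserve $\CC$-convexity, and $D$ is $\CC$-convex by virtue of being convex, so the image is automatically $\CC$-convex.

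First I would find a $\CC$-linear functional $L \colon \CC^n \to \CC$ satisfying two conditions: (a) $\re L \ge 0$ on $S(D)$ (equivalently, $\re L$ is bounded below on $D$ by the standard recession-cone argument, so after translation $D \subset \{\re L > 0\}$), and (b) $L(v) \ne 0$ for every $v \in S(D)\setminus\{0\}$. The hypothesis that $S(D)$ is properly contained in a two-dimensional real half-space makes this possible: when $S(D)$ is pointed (the main case), Hahn--Banach produces a real functional $\ell > 0$ on $S(D)\setminus\{0\}$, and setting $L(z) := \ell(z) - i\ell(iz)$ delivers both (a) and (b); in the degenerate line case $S(D) = \RR v_0$ one instead picks any $\CC$-linear $L$ with $L(v_0) = i$, so that $\re L$ vanishes on $\RR v_0$ (giving (a)) while $L(v_0) \ne 0$ (giving (b)). After a $\CC$-linear change of variables we may then assume $L(z) = z_1$ and $D \subset \{\re z_1 > 0\}$, with $v_1 \ne 0$ for every $v \in S(D) \setminus \{0\}$.

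Next apply the projective transformation
$$
\Psi(z_1, z_2, \ldots, z_n) := \left(\frac{z_1 - 1}{z_1 + 1},\; \frac{z_2}{z_1 + 1},\; \ldots,\; \frac{z_n}{z_1 + 1}\right),
$$
the affine-chart expression of the $\CC\pp^n$-automorphism $[z_0 : z_1 : z_2 : \cdots : z_n] \mapsto [z_0 + z_1 : z_1 - z_0 : z_2 : \cdots : z_n]$. Its denominator does not vanish on $\{\re z_1 > 0\}\supset D$, so $\Psi|_D$ is a biholomorphism onto its image, the image is $\CC$-convex because $\Psi$ preserves complex lines, and the Cayley factor places the first coordinate in $\DD$.

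The main obstacle, and the point where the structural hypothesis on $S(D)$ is used decisively, is verifying that $\Psi(D)$ is bounded --- specifically that $|z_j|/|z_1 + 1|$ stays bounded on $D$ for each $j \ge 2$. I would argue by a compactness argument. On bounded portions of $D$ the bound $|z_1 + 1| \ge \re z_1 + 1 > 1$ is immediate. Along any sequence $z^{(k)} \in D$ with $|z^{(k)}| \to \infty$, pass to a subsequence whose normalizations $z^{(k)}/|z^{(k)}|$ converge to a unit vector $v \in S(D)$; condition (b) ensures $v_1 \ne 0$, hence $|z_j^{(k)}|/|z_1^{(k)} + 1| \to |v_j|/|v_1|$, a finite value. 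Continuity of $v \mapsto |v_j|/|v_1|$ on the compact intersection of $S(D)$ with the unit sphere (where $|v_1|$ attains a positive minimum) then gives a uniform bound, placing $\Psi(D)$ in a fixed polydisk and completing the proof.
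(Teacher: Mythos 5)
Your overall route is the same as the paper's, just written out in much more detail: the paper likewise reduces, after a complex-affine change of variables, to $D\subset\{(z',z_n):\im z_n>f(z')\}$ with $f\ge 0$ convex and $\liminf_{\|z'\|\to\infty}f(z')/\|z'\|>0$, and then applies the projective map $z\mapsto\bigl(\tfrac{1}{i+z_n},\tfrac{z'}{i+z_n}\bigr)$, which preserves $\mathbb C$-convexity and has bounded image for exactly the reason you give (the denominator dominates $\|z'\|$ linearly on $D$). Your Cayley map and your boundedness argument are fine, provided the latter is run as a proof by contradiction: if $\sup_D|z_j|/|z_1+1|=\infty$, a witnessing sequence must be unbounded, its normalizations subconverge to a unit vector $v\in S(D)$ with $v_1\neq 0$ by (b), and then the ratio converges to the finite value $|v_j|/|v_1|$, a contradiction.

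The one step that is not correct as stated is the parenthetical equivalence in (a): for a convex domain it is \emph{not} true that $\re L\ge 0$ on $S(D)$ implies $\re L$ bounded below on $D$. The recession cone is polar to the \emph{closure} of the barrier cone, so a functional can be nonnegative (even identically zero) on $S(D)$ and still unbounded below on $D$; for instance $D=\{(x,y)\in\mathbb R^2:\ y<-x^2\}$ has $S(D)=\{0\}\times(-\infty,0]$, and $u(x,y)=x$ vanishes on $S(D)$ but is unbounded below on $D$. Fortunately both of your cases survive, for two different reasons that should be made explicit. In the pointed case you have the \emph{strict} inequality $\ell>0$ on $S(D)\setminus\{0\}$, and that does give boundedness below: if $\ell(x_k)\to-\infty$ along $x_k\in D$ then $\|x_k\|\to\infty$, a normalized subsequential limit $v\in S(D)$ with $\|v\|=1$ satisfies $\ell(v)\le 0$, contradicting strict positivity. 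In the line case, $S(D)=\mathbb R v_0$ forces $D=D+\mathbb R v_0$, i.e.\ $D$ is a cylinder $\mathbb R v_0\times D'$ with $S(D')=\{0\}$, so $D'$ is bounded; hence every functional vanishing on $\mathbb R v_0$ --- in particular your $\re L$ --- is actually bounded on $D$, which is what you need. With these two justifications inserted, your proof is complete and agrees in substance with the paper's.
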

Recall that a domain $D\subset\mathbb C^n$ is called \textit{$\mathbb C$-convex} if for any complex affine line $l$ intersecting $D$ the set $D\cap l$ is connected and simply connected.
\begin{proof}
Without loss of generality we may assume that
\begin{equation}
D\subset\{(z^{\prime},z_n)\in\mathbb C^n:\im z_n>f(z^{\prime})\},
\end{equation}
where $f:\mathbb C^{n-1}\to[0,\infty)$ is convex and such that $\liminf_{||z^{\prime}||\to\infty}\frac{f(z^{\prime})}{||z^{\prime}||}>0$. Then the map
\begin{equation}
z\to\left(\frac{1}{i+z_n},\frac{z^{\prime}}{i+z_n}\right)
\end{equation}
preserves the $\mathbb C$-convexity and the image of $D$ is bounded.
\end{proof}

Actually, the result of Zimmer (e. g. Proposition 1.9 in \cite{Zim 2016b}) suggests that the set defined there
\begin{equation}
\{(z^{\prime},z_n)\in\mathbb C^n:\im\left(\frac{1}{z_n}\right)|z_n|>||z^{\prime}||_p\},\;p>1
\end{equation}
is a possible candidate for the convex domain which is not biholomorphic to a bounded convex domain. As proven in \cite{Zim 2016b} this set is biholomorphic to a $\mathbb C$-convex bounded domain with a non-trivial analytic disc in the boundary. Its possible convex and bounded realization could not have this property; because as proven by Zimmer the set when endowed with the Kobayashi distance must be Gromov hyperbolic.

\subsection{Some examples}
 \begin{example} As another candidate for a counterexample could serve, for instance, the tube domain with the basis
 \begin{equation}
 \Omega:=\{x\in\mathbb R^2:x_1+x_2>1,x_1,x_2>0\}.
 \end{equation}
 \end{example}

 On the other hand an example studied in \cite{Zaj 2015b} (Example 3.7) is, as we see it below, biholomorphic to a bounded convex domain.

 \begin{example} Let us consider the tube domain with the base
 \begin{equation}
 \Omega:=\{x\in(0,\infty)^2:x_1x_2>1\}.
 \end{equation}
 The mapping
 \begin{equation}
 \Phi:z\to\left(\frac{1-z_1}{1+z_2},\frac{1-z_2}{1+z_2}\right)
 \end{equation}
 is an involution and $\Phi(T_{\Omega})$ lies in $\mathbb D^2$ and it may be described as
 \begin{equation}
 (1-|z_1|^2)(1-|z_2|^2)>|1+z_1|^2|1+z_2|^2,\; z_1,z_2\in\mathbb D.
 \end{equation}
It is elementary to see that the function $\rho(z):=\log|1+z_1|^2+\log|1+z_2|^2-\log(1-|z_1|^2)-\log(1-|z_2|^2)$
defines the set in $\mathbb D^2$, i. e. its gradient does not vanish on $\mathbb D^2$ and $\mathbb D^2\cap\Phi(T_{\Omega})=\{z\in\mathbb D^2:\rho(z)<0\}$. Moreover, the points $\left(\partial \Phi(T_{\Omega})\right)\cap\mathbb D^2$ are points of strong pseudoconvexity (but not of the strict convexity). Even more one has the equality
\begin{equation}
\overline{\Phi(T_{\Omega})}\cap\partial \mathbb D^2=\left(\{-1\}\times\overline{\mathbb D}\right)\cup\left(\overline{\mathbb D}\times\{-1\}\right).
\end{equation}
On the other hand (we put $\triangle(\lambda_0,r):=\{\lambda\in\mathbb C:|\lambda-\lambda_0|<r\}$, $\lambda_0\in\mathbb C$, $r>0$)
\begin{multline}
\partial \Phi(T_{\Omega})\cap\mathbb D^2=\Phi(\{x_1x_2=1,x_1,x_2>0\})=\\
\left\{\left(\frac{1-z_1}{1+z_2},\frac{1-z_2}{1+z_2}\right):\re z_1\re z_2=1, \re z_1,\re z_2>0\right\}=\\
\bigcup_{x\in(-1,1)}\partial\triangle\left(\frac{x+1}{2},\frac{1-x}{2}\right)\times\partial\triangle\left(\frac{-x+1}{2},\frac{1+x}{2}\right).
\end{multline}
And consequently
\begin{equation}
\Phi(T_{\Omega})=\bigcup_{x\in(-1,1)}\triangle\left(\frac{x+1}{2},\frac{1-x}{2}\right)\times\triangle\left(\frac{-x+1}{2},\frac{1+x}{2}\right).
\end{equation}
Therefore, the set $\Phi(T_{\Omega})$ is convex. It is however not strictly convex, for instance
\begin{equation}
\left\{(x,-x):x\in[-1,1]\right\}\subset\partial\Phi(T_{\Omega}).
\end{equation}

  \end{example}

\begin{remark} Using the results by S.Shimizu (see \cite{Shi 2000}) it is known that any Kobayashi hyperbolic convex tube domain $T_\Omega$ in $\mathbb C^2$ is biholomorphically equivalent to one of the following tubes $T_{\Omega_j}$, $j=1,..,4$:
\begin{itemize}
\item $\Omega_1=\{y\in\mathbb R^2: y_2>y_1^2\}$,
\item $\Omega_2=\{y\in\mathbb R^2:y_1>0, y_2>0\}$,
\item $\Omega_3=\{y\in\mathbb R^2:y_2>e^{y_1}\}$,
\item $T_{\Omega_4}$ with $\Aut(T_{\Omega_4})\leq 3$.
  \end{itemize} 
Recall that $T_{\Omega_1}$ is biholomorphic to a ball and $T_{\Omega_2}$ to a bidisc. It remains to see which of the other tube domains are biholomorphically equivalent to a bounded convex domain. It seems possible to answer this question for the third case and the fourth one when $\dim\Aut(T_{\Omega_4})=3$ using the characterization of (2,3)-manifolds from \cite{Isa 2007}. We will try to come back to this problem.
\end{remark}

\end{document}